\documentclass[a4paper,11pt]{amsart}
\usepackage{amssymb, amsthm, enumerate, graphicx, amscd, amsmath, amssymb, amsfonts, comment}
\usepackage[UKenglish]{babel}
\usepackage[mathcal]{euscript}
\numberwithin{equation}{section}
\newtheorem{theorem}[equation]{Theorem}
\newtheorem{lemma}[equation]{Lemma}

\newtheorem{problem}[equation]{Problem}

\newtheorem{proposition}[equation]{Proposition}

\newtheorem{remark}[equation]{Remark}
\newcommand{\D}{\mathbb{D}}
\newcommand{\C}{\mathbb{C}}
\newcommand{\T}{\mathbb{T}}

\newcommand{\R}{\mathbb{R}}
\newcommand{\N}{\mathbb{N}}

\usepackage{tcolorbox}

        	\definecolor{jazzberryjam}{rgb}{0.65, 0.04, 0.37}

\title[Orthogonal polynomials in $H(b)$]{Orthogonal polynomials in \\ de Branges--Rovnyak spaces}

\author[E. Dellepiane]{Eugenio Dellepiane}
\address{%
D\'{e}partement de math\'{e}matiques et statistique,
Universit\'{e} Laval,
Qu\'{e}bec, QC,
Canada G1K 7P4}
\email{dellepianeeugenio@gmail.com}

\author[D. Seco]{Daniel Seco}
\address{Universidad de la Laguna e IMAULL,  Avenida Astrof\'isico Francisco S\'anchez, s/n, Departamento de An\'alisis Matem\'atico \\ 38206 San Crist\'obal de La Laguna, Santa Cruz de Tenerife,  Spain} \email{dsecofor@ull.edu.es}

\date{\today}

\begin{document}

\begin{abstract} Given a function $b$, holomorphic on the disc and bounded by 1, one can construct an associated reproducing kernel Hilbert space called the de Branges--Rovnyak space $H(b)$. We explore representations of such spaces via descriptions of the corresponding families of orthogonal polynomials. We find relevant structures in the linear systems involved  in a diversity of cases when $b$ is rational. We also establish a form of invariance under some composition operators on $H(b)$ spaces.
\end{abstract}

\subjclass[2020]{Primary 30H45; Secondary 41A10, 42C05}

\keywords{de Branges-Rovnyak spaces, orthogonal polynomials}

\thanks{This collaboration was based on a visit of the first author to Universidad de La Laguna, funded by its Vicerrectorate of Research and Transfer. The first author is member of Gruppo Nazionale per l'Analisi Matematica, la Probabilità e le loro Applicazioni (GNAMPA) of Istituto Nazionale di Alta Matematica (INdAM). The second author was supported by Grant PID2024-160185NB-I00 funded by MICIU/AEI/10.13039/501100011033 and by FEDER, UE, through the Generaci\'on de Conocimiento programme of Spanish Ministry of Science, Innovation and Universities;  and by grant RYC2021-034744-I of the Ram\'on y Cajal programme from Agencia Estatal de Investigaci\'on (Spanish Ministry of Science, Innovation and Universities).}

\maketitle

\section{Introduction}

In the context of invariant subspaces for operators, an outstanding role is played by so-called de Branges--Rovnyak spaces $H(b)$. For each $b$ holomorphic over the unit disc and bounded (in modulus) by 1, there is an associated $H(b)$ space, which can be described in terms of its reproducing kernel. Indeed $H(b)$ is the only reproducing kernel Hilbert space over the unit disc with reproducing kernel $k_w$ at $w \in \D$ given by \[k_w(z) = \frac{1-\overline{b(w)}b(z)}{1-\overline{w}z}, \qquad z\in\mathbb{D}.\]
Their nature can be extremely diverse and a wide class of contractive operators on Hilbert spaces are unitarily equivalent to the backward shift's action on different $H(b)$ spaces. However unfortunate, we are still very far from a complete or even satisfactory understanding of their structure. We refer the reader to the standard references \cite{hb1, hb2, sarHb} for general concepts related to this topic. Usually, the $H(b)$ spaces are divided into two main categories, depending on whether the symbol $b$ is an extreme point of the closed unit ball of $H^\infty$, that is often denoted~$H^\infty_1$. Here, \emph{extremality} is intended with respect to the convex structure of $H^\infty_1$, and has an explicit analytic description: $b$ is non-extreme if and only if $\log(1-|b|)\in L^1(\mathbb{T})$. $H(b)$ spaces with non-extreme symbol share more similarities with the Hardy space $H^2$. For example, the shift operator $M_zf(z)=zf(z)$ is well-defined and bounded on $H(b)$ if and only if $b$ is non-extreme. However, the most interesting property for the purpose of this study is the following: $H(b)$ contains the set of polynomials~$\mathcal{P}$ if and only if $b$ is non-extreme. In this case, $\mathcal{P}$ is always dense in $H(b)$. Note that, in general, $H(b)$ spaces with an extreme symbol $b$ may even be finite-dimensional (e.g., whenever $b$ is a finite Blaschke product).

We are interested in understanding the approximation properties of polynomials in these spaces. With this in mind, we set ourselves a clear first goal: obtaining explicit orthonormal bases of polynomials in de Branges--Rovnyak spaces with a non-extreme symbol. In particular, we look for the result of applying the (normalized) Gram-Schmidt method to the canonical basis, leading to sets $\{p_n\}_{n\in\mathbb{N}}$ such that $p_n\in\mathcal{P}_n\setminus\mathcal{P}_{n-1}$ for every $n\in\mathbb{N}$ and $\overline{\operatorname{span}\{p_n\}_n}=~H(b)$, where $\mathcal{P}_n$ denotes the vector space of analytic polynomials with degree at most~$n$. The density of the span of the obtained sequence $\{p_n\}_n$ is deduced from that of $\mathcal{P}$. Of course, not all sets of orthonormal polynomials have necessarily this structure: for example, the set $\{(z+1)/\sqrt{2},(z-1)/\sqrt{2}\}\cup\{z^n\colon n\geq 2\}$ is a complete orthonormal basis in $H^2$. In an ideal situation, one would like to find a complete description of such polynomials. There could be cases in which, via Favard's classical theorem, a three-term recurrence relation for the orthogonal polynomials leads to a spectral measure making $H(b)$ equivalent to a space of the form $\operatorname{Hol}(\D) \cap L^2(\mu)$.  However, it seems too optimistic at this point to expect a full description by means of classical orthogonal polynomials theory.

The norm defined on $H(b)$ spaces can be quite mysterious and hard to work with. Nevertheless, there are some allies. When the function $b$ is non-extreme, there always exists an associated (unique) outer function $a$, called \emph{Pythagorean mate} of $b$. The quotient $\phi:=b/a$ is an analytic function on the unit disc. Instead of working with pairs $(b,a)$, one could equivalently consider functions~$\phi$ in the Smirnov class. All interesting cases of non-extreme de Branges--Rovnyak spaces have in common that $\phi$ has at least one singularity on the boundary $\mathbb{T}:=\partial\mathbb{D}$.

In this article, we describe the orthogonal polynomials for $H(b)$ spaces and present methods to find closed formulas for such polynomials, where $\phi$ is a rational function with  a single pole at a point of the boundary. 
We obtain closed formulas for the polynomials on each particular case by exploiting the structure found in the matrices associated to linear systems that appear in the process. We believe that this structure is a relevant finding in and of itself: the matrices can be transformed, with a few elementary operations, into (fixed and explicit) finite rank perturbations of rather simple banded matrices.

We begin, in Section \ref{S:preliminaries}, by introducing some notation, recalling well established results and we launch our quest by showing that there is some rigidity to what orthogonal polynomial sequences can be made of in $H(b)$ spaces. Indeed, Theorem \ref{T:monomials} shows that $H^2$ is the only $H(b)$ where the set of monomials is orthogonal. 

In Section \ref{S:mainresults}, we focus on functions of the form \begin{equation}\label{eqn1000}
\phi(z) =  A +\frac{B}{1-\overline{\zeta}z},\qquad z\in\mathbb{D}
\end{equation}
where $\zeta\in \T$ and $A$ and $B$ satisfy certain algebraic relation. In particular, as a consequence of Theorem \ref{T:rationalAB}, we discuss the orthogonal polynomials in a de Branges--Rovnyak space that has connections with a celebrated paper of Sarason \cite{Sarason1997LocalDS} and with a recent work of Fricain--Mashreghi \cite{fricainmashreghiorthonormalpoly}. Multiple equidistributed simple poles are solved in the same cases thanks to Theorem \ref{T:z^N}. In particular, we obtain the following explicit example:

\begin{theorem}\label{T: 1+z^N}
    Let $N\in\mathbb{N}\setminus\{0\}$ and $b(z)=(1+z^N)/2,z\in\mathbb{D}$. Then, the associated $H(b)$ space admits the following orthonormal basis of polynomials:
    \begin{align*}
\begin{aligned}
p_j(z)&=\frac{z^j}{\sqrt{2}},  &\qquad  &j<N,   \\
p_{Nk+i}(z)&=\frac{z^{N(k-1)+i}(z^N-1)}{2},  &\qquad &k>0, \quad 0\leq i<N.  
\end{aligned}
\end{align*}
\end{theorem}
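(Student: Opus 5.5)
The plan is to use the standard description of the norm in a non-extreme $H(b)$ space through the Pythagorean mate. For $f\in H(b)$ there is a unique $f^+\in H^2$ with $T_{\bar b}f=T_{\bar a}f^+$; uniqueness holds because $a$ is outer, so $T_{\bar a}$ is injective. The norm is then
\[
\|f\|_b^2=\|f\|_2^2+\|f^+\|_2^2 ,
\]
and by polarization $\langle f,g\rangle_b=\langle f,g\rangle_2+\langle f^+,g^+\rangle_2$.

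\textbf{Step 1: the mate $a$.} Write $w=z^N$. On $\T$,
\[
1-|b|^2=1-\tfrac{|1+w|^2}{4}=\tfrac{1-\operatorname{Re}w}{2}=\tfrac{|1-w|^2}{4},
\]
so I take $a(z)=(1-z^N)/2$. This function is outer and satisfies $a(0)>0$. In particular $b$ is non-extreme, the polynomials lie in $H(b)$ and are dense there, and $\phi=(1+z^N)/(1-z^N)$ has exactly the equidistributed simple poles described above.

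\textbf{Step 2: compute $f^+$ on two families of polynomials.}
\begin{itemize}
\item For $f=z^j$ with $0\le j<N$: on $\T$ we have $\bar b z^j=(z^j+z^{j-N})/2$, so $T_{\bar b}z^j=z^j/2$. Likewise $T_{\bar a}z^j=z^j/2$. Hence $f^+=z^j$.
\item For $f=(z^N-1)g=-2ag$ with $g$ a polynomial: on $\T$ a direct computation gives $\bar b a=(\bar z^N-z^N)/4=-\bar a b$. Therefore
\[
T_{\bar b}f=-2T_{\bar b}(ag)=2T_{\bar a}(bg),
\]
so $f^+=2bg=(1+z^N)g$.
\end{itemize}

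\textbf{Step 3: compute the inner products.}
\begin{itemize}
\item Monomials with $i,j<N$: $\langle z^j,z^i\rangle_b=2\delta_{ij}$, which gives the normalization $z^j/\sqrt2$.
\item Two multiples of $z^N-1$: using $|a|^2+|b|^2=1$ on $\T$,
\[
\langle (z^N-1)g,(z^N-1)h\rangle_b=4\int_\T(|a|^2+|b|^2)\,g\bar h\,dm=4\langle g,h\rangle_2 .
\]
Taking $g=z^{N(k-1)+i}/2$ over all $k>0$ and $0\le i<N$ gives an orthonormal family, since these exponents run over all of $\N$ exactly once.
\item Mixed terms: for $j<N$,
\[
\langle z^j,(z^N-1)g\rangle_b=\langle z^j,(z^N-1)g\rangle_2+\langle z^j,(1+z^N)g\rangle_2=2\langle z^j,z^Ng\rangle_2=0,
\]
because every monomial in $z^Ng$ has degree at least $N>j$.
\end{itemize}

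\textbf{Step 4: completeness.} The polynomial $p_{Nk+i}$ has exact degree $Nk+i$, and $p_j$ has degree $j$ for $j<N$. So for every $n$ the family $p_0,\dots,p_n$ spans $\mathcal P_n$. Since polynomials are dense in the non-extreme space $H(b)$, the family is a complete orthonormal basis, and it coincides, up to unimodular constants, with the Gram–Schmidt basis.

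The main obstacle is Step 2, in particular getting the signs right in the identity $\bar b a=-\bar a b$ on $\T$ and in the resulting formula for $f^+$. I would check these signs against the cross term in Step 3, since that term vanishes only with the correct choice $f^+=(1+z^N)g$.
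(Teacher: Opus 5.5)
Your proof is correct, and it takes a genuinely different route from the paper. The paper obtains this theorem as a corollary. The case $N=1$ is Theorem~\ref{T:rationalAB} with $\phi=(1+z)/(1-z)=-1+2/(1-z)$, which is proved by computing the monomial Gram matrix from the Taylor coefficients of $\phi$ via Lemma~\ref{L:formulainnerproduct}. The general case then follows from Theorem~\ref{T:z^N}: the Gram matrix of $H(b(z^N))$ splits along residue classes mod $N$ into copies of the Gram matrix of $H(b)$, so $q_{Nl+i}(z)=z^ip_l(z^N)$.

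You instead use the $f^+$ description of the norm. Since $f^+=T_{\bar\phi}f$, this is the same norm formula \eqref{E:formulanormphi}, but you work on $\T$ rather than coefficientwise. The boundary identities $|a|^2+|b|^2=1$ and $\bar b a=-\bar a b$ then reduce everything to $H^2$ inner products. Your signs are right; for $N=1$ they agree with $T_{\bar\phi}(z-1)=1+z$ obtained from \eqref{E:formulaz^n+}.

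Your argument treats all $N$ at once, needs no Gram matrices and no composition theorem, and gives slightly more. Your computation of $f^+$ for $f=(z^N-1)g$ never uses that $g$ is a polynomial. So $g\mapsto (z^N-1)g/2$ is an isometry from $H^2$ onto a closed subspace of $H(b)$ orthogonal to $\mathcal{P}_{N-1}$, and $H(b)=\mathcal{P}_{N-1}\oplus (z^N-1)H^2$.

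The price is that everything hinges on $\bar b a=-\bar a b$ on $\T$, i.e.\ on $\phi=(1+z^N)/(1-z^N)$ having purely imaginary boundary values. That shortcut is specific to this symbol; it fails, for instance, for the other admissible pairs $A,B$ in Theorem~\ref{T:rationalAB}. The paper's route instead yields Theorem~\ref{T:z^N} and Theorem~\ref{T:H(b(z^N))} for every non-extreme $b$.

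Finally, the leading coefficients $1/\sqrt2$ and $1/2$ are positive. So your basis is exactly the Gram--Schmidt one, not merely equal to it up to unimodular constants.
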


We conclude Section \ref{S:mainresults} by discussing the de Branges--Rovnyak space associated to the composition $b(z^N)$, $N\in\mathbb{N}$, for a given symbol $b$. This opens some questions concerning composition operators on $H(b)$. In Section~\ref{S:doublepoles} we complete the picture on orthogonal polynomials by studying, firstly, a general $\phi$ as in~\eqref{eqn1000}, leading to a 3-term recurrence relation satisfied by the coefficients of the orthonormal polynomials. This recurrence relation is presented on our Theorem \ref{T:recurrencerelation}. Then, in Section \ref{higher}, we show with examples how to extend our methods to higher order poles. The polynomials can be obtained by solving certain linear systems, and we conclude our article in Section \ref{conclusion}, proposing a description of the structure of the involved matrices.

\section{Preliminaries and notation}\label{S:preliminaries}

The Hardy space $H^2$ is defined as
\[
H^2=\{f(z)=\sum_{n=0}^\infty a_nz^n \in \operatorname{Hol}(\mathbb{D})\colon \|f\|_2^2=\sum_n|a_n|^2<\infty \}.
\]
It is a Hilbert space with respect to the natural inner product $\langle\cdot,\cdot\rangle_2$. The space $H^\infty$ is the space of bounded analytic functions on $\mathbb{D}$, and it is a Banach space when endowed with the norm $\|f\|_\infty=\sup_{\mathbb{D}}|f|$. For more details, we refer to \cite{garnett}.

We move on to the de Branges--Rovnyak spaces. As mentioned in the Introduction, standard references for these spaces are \cite{hb1,hb2,sarHb}. Inner and outer functions will play a special role in here and we refer to \cite{garnett} for more on them. Given a function $b$ in the closed unit ball of $H^\infty$, that is,
\[
H^\infty_1=\{f\in H^\infty \colon \|f\|_\infty\leq 1\},
\]
the associated Toeplitz operator $T_b\colon H^2\to H^2,f\mapsto bf$ satisfies $I-T_b T_b^\ast\geq 0$, in the sense of operators. The associated de Branges--Rovnyak space $H(b)$ is then defined as the range $(I-T_b T_b^\ast)^\frac{1}{2}H^2$, and it is a Hilbert space endowed with the range norm
\[
\|(I-T_b T_b^\ast)^\frac{1}{2}f\|_{H(b)}:=\|P_{\operatorname{Ker}(I-T_b T_b^\ast)^\frac{1}{2}} f\|_2,\qquad f\in H^2,
\]
where $P_{V}$ stands for the orthogonal projection onto the closed subspace $V$ (in this case, the kernel of $(I-T_b T_b^\ast)^\frac{1}{2}$). This is coherent with the previous description, since $H(b)$ can also be realized as the reproducing kernel Hilbert space associated to the kernel
\[
k_w(z) = \frac{1-\overline{b(w)}b(z)}{1-\overline{w}z}, \qquad z,w\in\mathbb{D}.
\]
Allow us to restate that when $b$ is a non-extreme point of $H^\infty_1$, that is, $\log(1-|b|)\in L^1(\mathbb{T})$, then $H(b)$ contains the set of all polynomials $\mathcal{P}$, and $\mathcal{P}$ is dense in $H(b)$. 

When $b$ is non-extreme, we can define a unique \emph{outer} function~$a$ such that $a(0)>0$ and $|a|^2+|b|^2=1$ $m$-a.e. on~$\mathbb{T}$, as
\[
a(z):= \exp{\left(\int_\mathbb{T} \frac{\zeta+z}{\zeta-z}\log(1-|b(\zeta)|^2)^\frac{1}{2}\,\operatorname{d}\!m(\zeta)\right)}, \qquad z\in\mathbb{D},
\]
where $m$ denotes the normalized Lebesgue measure. We say that $a$ is the Pythagorean mate of $b$, and that $(b,a)$ is a Pythagorean pair. The existence of such function $a\in H^\infty$ that realizes $|a|^2+|b|^2=1$ $m$-a.e. on~$\mathbb{T}$ is actually equivalent (\cite[Theorem 4.1]{garnett}) to the fact that $b$ is non-extreme, for
\[
\int_{\mathbb{T}}\log(1-|b|^2)\operatorname{d}\!m = \int_{\mathbb{T}}\log(|a|^2)\operatorname{d}\!m >-\infty.
\]

Instead of working with pairs $(b,a)$, one could equivalently consider functions~$\phi$ in the Smirnov class. The following result comes from Proposition 3.1 and Remark 3.2 in \cite{Sarason2008unbounded}.
\begin{proposition}
A non-zero function $\phi$ in the Smirnov class can be written uniquely as $\phi=b/a$, where $(b,a)$ is a Pythagorean pair. Furthermore, if the Smirnov function $\phi$ is rational, then the functions $a$ and $b$ are rational.
\end{proposition}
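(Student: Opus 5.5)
Given a non-zero Smirnov function $\phi$, we would construct the Pythagorean pair directly from $\phi$. Smirnov functions have well-defined non-tangential boundary values $\phi(\zeta)$ almost everywhere on $\mathbb{T}$, with $\log|\phi|\in L^1(\mathbb{T})$. Set $w:=(1+|\phi|^2)^{-1/2}$ on $\mathbb{T}$. Then $0<w\le 1$ a.e. Moreover
\[
\log w=-\tfrac12\log(1+|\phi|^2)\ge -\log(1+|\phi|)\ge -\log 2-\log^+|\phi|,
\]
so $\log w\in L^1$. We then let $a$ be the outer function with $|a|=w$ on $\mathbb{T}$ and $a(0)>0$, and define $b:=\phi a$.

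First we check that $b\in H^\infty_1$. Since $\phi\in N^+$ and $a$ is bounded outer, the product $b=\phi a$ lies in $N^+$. On $\mathbb{T}$ we have $|b|=|\phi|(1+|\phi|^2)^{-1/2}\le 1$ a.e. Smirnov's theorem (an $N^+$ function with $L^\infty$ boundary values belongs to $H^\infty$) then gives $\|b\|_\infty\le1$. Finally $|a|^2+|b|^2=(1+|\phi|^2)/(1+|\phi|^2)=1$ a.e., so $(b,a)$ is a Pythagorean pair with $b/a=\phi$.

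Uniqueness comes next. Suppose $\phi=b_1/a_1$ with $(b_1,a_1)$ another Pythagorean pair, $a_1$ outer and $a_1(0)>0$. On $\mathbb{T}$ we have $1=|a_1|^2(1+|\phi|^2)$, so $|a_1|=w=|a|$. Two outer functions with the same modulus and positive value at $0$ coincide, hence $a_1=a$ and then $b_1=\phi a=b$.

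For the rational case, write $\phi=p/q$ in lowest terms. Being in $N^+$ forces $q$ to have no zeros in $\D$, though zeros on $\mathbb{T}$ are allowed. On $\mathbb{T}$,
\[
|a|^2=\frac{|q|^2}{|q|^2+|p|^2},
\]
and $|q|^2+|p|^2$ is a non-negative trigonometric polynomial that does not vanish identically. By the Fejér--Riesz theorem we can write $|p|^2+|q|^2=|r|^2$ on $\mathbb{T}$ with $r$ a polynomial having no zeros in $\D$; in fact none on $\mathbb{T}$ either, since $p$ and $q$ have no common zeros. Then $q/r$ is rational and outer, because $q$ and $r$ have no zeros in $\D$ and polynomials with zeros only on $\mathbb{T}$ are outer. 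It has modulus $|a|$ on $\mathbb{T}$, so after multiplying by a unimodular constant, $a=q/r$ by uniqueness of outer functions. Hence $b=p/r$, and both are rational.

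The main obstacle is the bookkeeping in the Smirnov-class step: justifying that $\phi a\in N^+$ and that bounded boundary values force membership in $H^\infty$. This is standard (Smirnov's maximum principle). The only other point needing care is the Fejér--Riesz factorization in the rational case.
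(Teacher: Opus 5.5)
Your proof is correct and takes essentially the same approach as the source the paper relies on: the paper gives no proof of its own and simply cites Proposition~3.1 and Remark~3.2 of Sarason's paper on unbounded Toeplitz operators. Your argument is essentially the standard proof of those results: take $a$ outer with $|a|^{-2}=1+|\phi|^2$ on $\mathbb{T}$ and $a(0)>0$, set $b=a\phi$ and apply Smirnov's theorem, obtain uniqueness from uniqueness of outer functions, and handle the rational case via the Fej\'er--Riesz factorization.
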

Given a pair $(b,a),$ the quotient $\phi:=b/a$ is an analytic function on the unit disc, and it defines a (possibly unbounded) Toeplitz operator $T_\phi$. It turns out that the domain of the adjoint~$T_{\bar{\phi}}$ is exactly $H(b)$, and that for every $f$ in $H(b)$ the formula
\begin{equation} \label{E:formulanormphi}
\|f\|_b^2=\|f\|_2^2+\|T_{\bar{\phi}} f\|_2^2,
\end{equation}
holds, see \cite{Sarason2008unbounded}. We denote by $\|\cdot\|_b$ and $\langle \cdot,\cdot\rangle_b$, respectively, the norm and the inner product of $H(b)$. 

The identity \eqref{E:formulanormphi} is far from being explicit, but it will allow us to compute the inner products $\langle z^j,z^k\rangle_b$, that are essential bricks in our analysis. In what follows, $(\phi_n)_n$ will denote the Taylor coefficients in the analytic expansion of $\phi$ around $0$, that is,
\begin{equation} \label{E:phicoeff}
\phi(z)=\frac{b(z)}{a(z)}=\sum_{n=0}^\infty \phi_n z^n, \qquad z\in\mathbb{D}.
\end{equation}

The operator $T_{\overline{\phi}}$ acts on functions $f(z)=\sum_{n=0}^\infty a_n z^n$ in $\operatorname{Hol}(\overline{\mathbb{D}})$ as
\[
T_{\overline{\phi}} f(z)=\sum_{m=0}^\infty\left( \sum_{n=0}^\infty\overline{\phi_n}a_{m+n}\right)z^m, \qquad z \in\mathbb{D}.
\]
For more details, see \cite[Section 6]{Sarason2008unbounded}. In particular, for each $k\in\mathbb{N}$ we obtain the following formulas for the monomial $z^k$:
\begin{equation}\label{E:formulaz^n+}
    T_{\overline{\phi}}z^k=\sum_{m=0}^k \overline{\phi_{k-m}}z^m.
\end{equation}

This leads to the the following result, that already appeared in \cite{Chevrot2010}. We include an easy proof for the sake of completeness.
 \begin{lemma}\label{L:formulainnerproduct}
Let $(b,a)$ be a Pythagorean pair, $\phi$ as in \eqref{E:phicoeff} and $j, k \in \N$ with $j\leq k$. Then,
\begin{equation*}
\langle z^j,z^k\rangle_b=\delta_{j,k} + \sum_{s=0}^j \overline{\phi_s}\phi_{k-j+s}.
\end{equation*}
\end{lemma}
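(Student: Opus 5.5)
We will obtain the formula by polarizing the norm identity \eqref{E:formulanormphi}, and then evaluating the resulting $H^2$ inner products with the explicit action \eqref{E:formulaz^n+} of $T_{\overline{\phi}}$ on monomials.

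\textbf{Step 1: polarization.} Since $\|f\|_b^2=\|f\|_2^2+\|T_{\bar\phi}f\|_2^2$ holds for every $f\in H(b)$, and $T_{\bar\phi}$ is linear on its domain $H(b)$ (which contains $\mathcal{P}$ because $b$ is non-extreme), the polarization identity gives
\[
\langle f,g\rangle_b=\langle f,g\rangle_2+\langle T_{\bar\phi}f,T_{\bar\phi}g\rangle_2 .
\]
This holds for all $f,g\in H(b)$, and in particular for $f=z^j$, $g=z^k$. The first term is $\langle z^j,z^k\rangle_2=\delta_{j,k}$.

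\textbf{Step 2: the Toeplitz term.} By \eqref{E:formulaz^n+},
\[
T_{\bar\phi}z^j=\sum_{m=0}^j\overline{\phi_{j-m}}z^m, \qquad T_{\bar\phi}z^k=\sum_{m=0}^k\overline{\phi_{k-m}}z^m .
\]
The monomials are orthonormal in $H^2$, and $j\le k$, so only the indices $0\le m\le j$ contribute:
\[
\langle T_{\bar\phi}z^j,T_{\bar\phi}z^k\rangle_2=\sum_{m=0}^j\overline{\phi_{j-m}}\,\phi_{k-m}.
\]
Reindexing with $s=j-m$, which runs over $0,\dots,j$, gives $\phi_{k-m}=\phi_{k-j+s}$. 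The sum therefore becomes $\sum_{s=0}^j\overline{\phi_s}\phi_{k-j+s}$, which is exactly the claimed formula.

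\textbf{Main obstacle.} The only delicate point is justifying Step 1: that \eqref{E:formulanormphi} may be polarized, and that \eqref{E:formulaz^n+} is the correct action of the possibly unbounded operator $T_{\bar\phi}$ on polynomials. For the first, polarization is legitimate because $T_{\bar\phi}$ is a linear operator on the vector space $H(b)$ and both sides of \eqref{E:formulanormphi} are quadratic forms on that space. For the second, I would cite \cite[Section 6]{Sarason2008unbounded}, as the paper does; the formula is consistent because $z^k$ has finitely many coefficients, so each coefficient sum is finite. With these two facts in hand, the rest is the routine computation above.
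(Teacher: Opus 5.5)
Your proof is correct and follows the paper's argument. The paper likewise obtains $\langle z^j,z^k\rangle_b=\delta_{j,k}+\langle T_{\bar\phi}z^j,T_{\bar\phi}z^k\rangle_2$ from \eqref{E:formulanormphi}, leaving the polarization implicit where you justify it, and then evaluates the $H^2$ inner product using the explicit action of $T_{\bar\phi}$ on monomials, exactly as in your Step 2.
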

\begin{proof}
Equation \eqref{E:formulanormphi} gives
\[\langle z^j,z^k\rangle_b=\delta_{j,k}+\langle T_{\bar{\phi}} z^j,T_{\bar{\phi}}z^k\rangle_2.\]
Using now the explicit expression for $T_{\bar{\phi}} z^j,T_{\bar{\phi}} z^k$ in \eqref{E:formulaz^n+}, we obtain
\[\langle T_{\bar{\phi}} z^j,T_{\bar{\phi}}z^k\rangle_2=\sum_{n=0}^j\sum_{s=0}^k \overline{\phi_{j-n}} \phi_{k-s}\delta_{n,s}=\sum_{s=0}^j \overline{\phi_s}\phi_{k-j+s}. \]
\end{proof}

In many classical spaces of analytic functions, there exists an orthonormal basis of polynomials formed by monomials. We show that this is never the case in non-trivial $H(b)$ spaces. We will use the following well-known fact: $H(b)=H^2$ if and only if $\phi\in H^\infty$. See for example \cite{malmanseco} for relations between $H(b)$ and Hardy spaces.

\begin{theorem}\label{T:monomials}
Let $(b,a)$ be a Pythagorean pair, $\phi$ as in \eqref{E:phicoeff}. If the set of monomials $\{z^n\}_{n\in\mathbb{N}}$ is orthogonal in $H(b)$, then $H(b)=H^2$.
\end{theorem}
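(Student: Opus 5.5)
Assume the monomials are pairwise orthogonal in $H(b)$. We will show that $\phi$ is a constant, so $\phi\in H^\infty$, and then the fact recalled just before the statement gives $H(b)=H^2$.

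Lemma \ref{L:formulainnerproduct} turns orthogonality into conditions on the Taylor coefficients $(\phi_n)_n$. For $j<k$, put $d=k-j\geq 1$. Orthogonality of $z^j$ and $z^k$ reads
\[
\sum_{s=0}^{j}\overline{\phi_s}\,\phi_{d+s}=0 \qquad\text{for all } j\geq 0,\ d\geq 1.
\]
Taking differences in $j$ (the case $j$ minus the case $j-1$) isolates a single term:
\[
\overline{\phi_j}\,\phi_{j+d}=0 \qquad\text{for every } j\geq 0 \text{ and } d\geq 1,
\]
and $j=0$ gives the same identity directly.

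Now let $n_0$ be the smallest index with $\phi_{n_0}\neq 0$. Such an index exists, since $\phi\neq 0$ by the Proposition, $b$ being nonzero in the non-trivial situation. Taking $j=n_0$ gives $\phi_{n_0+d}=0$ for all $d\geq 1$. Hence $\phi(z)=\phi_{n_0}z^{n_0}$ is a monomial, in particular a bounded function. If $b\equiv 0$ then $H(b)=H^2$ trivially. Either way $\phi\in H^\infty$, so $H(b)=H^2$.

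No step is a real obstacle. The only care needed is the telescoping in $j$ for fixed $d$, and checking that the $j=0$ case is included.
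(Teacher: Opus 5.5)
Your proof is correct and follows the paper's argument: take the first nonzero Taylor coefficient $\phi_{n_0}$ (after disposing of $\phi\equiv 0$), use the orthogonality of $z^{n_0}$ and $z^{n_0+d}$ to force $\phi_{n_0+d}=0$, and conclude that $\phi$ is a bounded monomial. The telescoping in $j$ is harmless but unnecessary, because at $j=n_0$ the sum already collapses to $\overline{\phi_{n_0}}\,\phi_{n_0+d}$; and your opening sentence wrongly promises that $\phi$ is \emph{constant}, when what you actually prove is that $\phi$ is a monomial, which is the correct conclusion ($\phi=cz^{n_0}$ with $n_0\geq 1$ does make the monomials orthogonal).
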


\begin{proof}
If $\phi\equiv 0$, then $H(b) = H^2$ and the conclusion holds. Otherwise, let $k_0:=\min\{k\in\mathbb{N}\colon \phi_k\neq 0\}.$ First, we are going to show that $\phi=\phi_{k_0}z^{k_0} (\in H^\infty)$. Indeed, for each $m>0$,
\[
0=\langle z^{k_0},z^{k_0+m}\rangle_b=\sum_{s=0}^{k_0} \overline{\phi_s}\phi_{m+s}=\overline{\phi_{k_0}}\phi_{m+k_0}.
\]
Since $\phi_{k_0}\neq 0$, we proved that $\phi_n=0$ for every $n\neq k_0$.

Now, $\phi$ is a monomial, and thus a bounded function, from which we may conclude that $H(b)=H^2$ concluding the proof.
\end{proof}

To conclude this section, we discuss the Gram-Schmidt algorithm. This is very standard, but we recall some details as a way to fix notation. The notation $f\bot g$ means that $f$ and $g$ are orthogonal in the norm of $H(b)$, that is, $\langle f,g\rangle_b =0$.

For the rest of the paper, we consider orthonormal bases of polynomials $\{p_n\}_{n\in\mathbb{N}}$ such that the degree $\operatorname{deg}(p_n)=n$ for every $n\in\mathbb{N}$. We will write
\[
p_n(z)=\sum_{k=0}^n c_k^{(n)}z^k,\qquad z\in\mathbb{D}.
\] Whenever $n$ can be understood to be fixed, we will drop the superindex $(n)$.
In order to guarantee uniqueness, we choose $c_n^{(n)}>0$. Since $\operatorname{deg}(p_0)=0$, $p_0$ is just a constant and $p_0=1/\|1\|_b$. By induction, for $n\geq 1$, we define $p_n$ in terms of the polynomials $p_k$ with $k<n$ as 
\[p_n(z)=c_n^{(n)}\big(z^n - \sum_{k=0}^{n-1}\langle z^n, p_k\rangle_b p_k\big).\]
 Thus, $c_n^{(n)}>0$ is the unique positive number such that $\|p_n\|_b=1$. 

Notice that, equivalently, the polynomial $p_n$ is the unique polynomial of degree $n$, with norm equal to $1$ and positive $n$-th coefficient that satisfies the $n$ equations given by $\langle p_n,z^k\rangle_b=0,k=0,\ldots,n-1$. When $k=0$, this orthogonality relation follows from the fact that $p_n\bot p_0$ and, for $0<k<n$, one can argue by induction on the degree $k$.

\section{Main results}\label{S:mainresults}

In what follows, we restrict ourselves to the case of a rational $\phi$ with a single pole $\zeta$ on the unit circle $\T$. Notice that the pole of $\phi$ can never lie in $\D$, because $b$ is analytic and $a$ is outer, and the case where the pole lies outside of $\D$ seems less interesting to us, since in that case $\phi$ is in $H^\infty$ and then $H(b)=H^2$. Moreover, by the following argument, we can always assume that $\zeta=1$ in \eqref{eqn1000}.
\begin{proposition}\label{propo1}
    Consider a non-extreme $b$ in $H^\infty_1$, and $\gamma\in\mathbb{R}$. If we consider the rotation $\widetilde{b}(z):=b(e^{i\gamma}z),z\in\mathbb{D}$, then the sequence of orthonormal polynomials $\{\widetilde{p_n}\}_n$ of $H(\widetilde{b})$ is obtained by the sequence of orthonormal polynomials $\{p_n\}_n$ of $H(b)$ via the rotation
    \[
    \widetilde{p_n}(z)=e^{-in\gamma}p_n(e^{i\gamma}z),\qquad z\in\mathbb{D}.
    \]
\end{proposition}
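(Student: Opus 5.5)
The plan is to show that the rotation $U_\gamma f(z):=f(e^{i\gamma}z)$ is a unitary map from $H(b)$ onto $H(\widetilde b)$, and then to track how the Gram--Schmidt normalisation interacts with it. Since $U_\gamma$ maps $\mathcal{P}_n$ onto $\mathcal{P}_n$ and preserves degrees, this will suffice.

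\textbf{Step 1: $U_\gamma$ is unitary.} I would use reproducing kernels. The kernel of $H(\widetilde b)$ satisfies
\[
\widetilde k_w(z)=\frac{1-\overline{b(e^{i\gamma}w)}\,b(e^{i\gamma}z)}{1-\overline{w}z}=k_{e^{i\gamma}w}(e^{i\gamma}z),
\]
because $\overline{e^{i\gamma}w}\,e^{i\gamma}z=\overline{w}z$. A standard fact about reproducing kernel Hilbert spaces then applies: if $K'(z,w)=K(\sigma z,\sigma w)$ for a bijection $\sigma$ of $\mathbb{D}$, then $f\mapsto f\circ\sigma$ is unitary from $H_K$ onto $H_{K'}$. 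It maps $k_{\sigma w}$ to $\widetilde k_w$ and preserves inner products on finite linear combinations of kernels, since $\langle \widetilde k_w,\widetilde k_v\rangle=\widetilde k_w(v)=k_{\sigma w}(\sigma v)$, and it extends by density. So $\langle f\circ\sigma,g\circ\sigma\rangle_{\widetilde b}=\langle f,g\rangle_b$.

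As an alternative that stays within the paper's tools, I could work at the level of monomials. Here $\widetilde b$ is non-extreme, its Pythagorean mate is $\widetilde a(z)=a(e^{i\gamma}z)$ (still outer, with $\widetilde a(0)=a(0)>0$ and $|\widetilde a|^2+|\widetilde b|^2=1$ a.e.), and therefore $\widetilde\phi_n=e^{in\gamma}\phi_n$. Lemma~\ref{L:formulainnerproduct} then gives
\[
\langle z^j,z^k\rangle_{\widetilde b}=e^{i(j-k)\gamma}\langle z^j,z^k\rangle_b,
\]
which is exactly $\langle U_\gamma z^j,U_\gamma z^k\rangle_b=e^{i(j-k)\gamma}\langle z^j,z^k\rangle_b$, read in the form $\langle U_\gamma z^j, U_\gamma z^k\rangle_{\widetilde b}=\langle z^j,z^k\rangle_b$. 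By sesquilinearity this extends to all polynomials, and that is all that is needed, since only polynomials are involved.

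\textbf{Step 2: Gram--Schmidt.} Set $q_n:=e^{-in\gamma}U_\gamma p_n$. By Step 1 the family $\{q_n\}$ is orthonormal in $H(\widetilde b)$, and $q_n$ has degree exactly $n$. Its leading coefficient is $e^{-in\gamma}c_n^{(n)}e^{in\gamma}=c_n^{(n)}>0$. Equivalently, $q_n\perp_{\widetilde b} z^k$ for $k<n$, because $z^k=e^{-ik\gamma}U_\gamma z^k$ and $p_n\perp_b z^k$. By the uniqueness characterisation recalled at the end of Section~\ref{S:preliminaries} (degree $n$, unit norm, positive leading coefficient, orthogonal to $\mathcal{P}_{n-1}$), we get $q_n=\widetilde{p_n}$.

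\textbf{Main obstacle.} The only delicate point is the bookkeeping of the phase $e^{-in\gamma}$, which is needed to restore positivity of the leading coefficient, together with confirming the identification of the Pythagorean mate, or alternatively avoiding it via the kernel argument. Neither step is hard.
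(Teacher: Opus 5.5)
Your proof is correct, and your primary Step 1 takes a different route from the paper. The paper's proof is essentially your \emph{alternative}:
\begin{itemize}
\item it identifies $\widetilde a(z)=a(e^{i\gamma}z)$ as the Pythagorean mate and reads off $\widetilde\phi_k=e^{ik\gamma}\phi_k$;
\item it uses Lemma~\ref{L:formulainnerproduct} to relate the two Gram matrices of monomials;
\item it expands $p_n(e^{i\gamma}z)$ in coefficients to check orthonormality in $H(\widetilde b)$, and fixes the phase by uniqueness.
\end{itemize}
Your kernel pullback $\widetilde k_w(z)=k_{e^{i\gamma}w}(e^{i\gamma}z)$ instead shows at once that $f\mapsto f(e^{i\gamma}\cdot)$ is unitary from all of $H(b)$ onto $H(\widetilde b)$. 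That route needs neither the Pythagorean mate nor formula \eqref{E:formulanormphi}, and it is valid for every $b\in H^\infty_1$. It also gives for free that $H(\widetilde b)$ contains the polynomials. The paper's computation, on the other hand, stays in the coefficient language ($\phi_n$, monomial Gram matrices) used throughout the rest of the article, and it makes the explicit phase relation between the Gram matrices visible. Your Step~2 is the same uniqueness argument as the paper's.

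One correction to your alternative. Lemma~\ref{L:formulainnerproduct} gives $\overline{\widetilde\phi_s}\,\widetilde\phi_{k-j+s}=e^{i(k-j)\gamma}\,\overline{\phi_s}\phi_{k-j+s}$. Hence $\langle z^j,z^k\rangle_{\widetilde b}=e^{i(k-j)\gamma}\langle z^j,z^k\rangle_b$, not $e^{i(j-k)\gamma}$. For example, with $b=(1+z)/2$ one gets $\langle 1,z\rangle_{\widetilde b}=2e^{i\gamma}$.

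With your sign, $\langle U_\gamma z^j,U_\gamma z^k\rangle_{\widetilde b}$ would equal $e^{2i(j-k)\gamma}\langle z^j,z^k\rangle_b$. Also, identifying $\langle z^j,z^k\rangle_{\widetilde b}$ with $\langle U_\gamma z^j,U_\gamma z^k\rangle_b$ (an inner product in $H(b)$) is not right. With the correct sign you get $\langle U_\gamma z^j,U_\gamma z^k\rangle_{\widetilde b}=e^{i(j-k)\gamma}e^{i(k-j)\gamma}\langle z^j,z^k\rangle_b=\langle z^j,z^k\rangle_b$, as you intended. This slip does not affect your kernel-based proof.
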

\begin{proof}
    It is immediate to check that $\widetilde{a}(z):=a(e^{i\gamma}z),z\in\mathbb{D},$ and $\widetilde{b}$ form a Pythagorean pair, and $\widetilde{\phi}(z):=\widetilde{b}(z)/\widetilde{a}(z)=\phi(e^{i\gamma}z).$ In particular, the Taylor coefficients of $\widetilde{\phi}$ are given by $\widetilde{\phi}_k = e^{ik\gamma}\phi_k.$ Then, by Lemma \ref{L:formulainnerproduct}, for $j\leq k$
    \begin{align*}
        \langle z^j,z^k\rangle_{\widetilde{b}}=\delta_{j,k} + e^{i(k-j)\gamma}\sum_{s=0}^j \overline{\phi_s}\phi_{k-j+s}=e^{i(k-j)\gamma} \langle z^j,z^k\rangle_{b}.
    \end{align*}
    We write \(p_n(z)=\sum_{j=0}^n c_j^{(n)}z^n\) and we set $q_n(z):=p_n(e^{i\gamma}z), z\in\mathbb{D}.$ For $n\leq m$
    \[
    \langle q_n,q_m\rangle_{\widetilde{b}}=\sum_{j=0}^n\sum_{k=0}^m c_j^{(n)}\overline{c_k^{(m)}}\langle e^{ij\gamma}z^j,e^{ik\gamma}z^k\rangle_{\widetilde{b}}=\langle p_n,p_m\rangle_{b}=\delta_{n,m}.
    \]
    This shows that the polynomials $\{q_n\}_n$ are orthonormal in $H(\widetilde{b})$, and then by uniqueness the polynomials $\{\widetilde{p_n}\}_n$ must have the desired expression, since we want the $n$-th coefficient to be positive.
\end{proof}

\subsection{A first family of examples} We will start considering functions of the form 
\begin{equation} \label{E:phirational}   
\phi(z)=A+\frac{B}{1-z}, \qquad z\in\mathbb{D},
\end{equation}
 where $A,B$ are complex numbers. We will soon show how this comprehends one of the easiest and most studied example of de Branges-Rovnyak spaces. However, an explicit basis of polynomials is still lacking, and as we will see complex phenomena already occur in this basic case. First, we show that a rigid algebraic relation between $A$ and $B$ allows for a very easy set of orthonormal polynomials. 

\begin{theorem}\label{T:rationalAB}
    Let $\phi$ be defined as in \eqref{E:phirational}, with $A,B\in\mathbb{C}$. The set of polynomials $\{q_n\}_{n\in\mathbb{N}}$ with $q_0=1$, $q_n(z)=z^{n-1}(z-1), n>0,$ is orthogonal in $H(b)$ if and only if 
    \(
    \overline{A}B=-(1+|A|^2).
    \)     Moreover, in this case, $A\neq 0$ and
    \[
    \|q_0\|_b=\sqrt{1+\frac{1}{|A|^2}}, \qquad \|q_n\|_b=|A|+\frac{1}{|A|},\quad  n>0.
    \]
\end{theorem}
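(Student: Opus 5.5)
The plan is to compute everything directly from the norm formula \eqref{E:formulanormphi}, polarized as
\[
\langle f,g\rangle_b=\langle f,g\rangle_2+\langle T_{\bar\phi}f,T_{\bar\phi}g\rangle_2 .
\]
All $q_n$ are polynomials and $b$ is non-extreme, so they lie in $H(b)$. The key simplification is that $T_{\bar\phi}$ maps each $q_n$ to a polynomial with at most two terms.

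First, the Taylor coefficients of $\phi$ in \eqref{E:phirational} are $\phi_0=A+B$ and $\phi_k=B$ for $k\ge1$. By \eqref{E:formulaz^n+},
\[
T_{\bar\phi}z^k=(\overline A+\overline B)z^k+\overline B\,(1+z+\dots+z^{k-1}).
\]
Subtracting the formulas for $k=n$ and $k=n-1$, the geometric tails cancel. This gives
\[
T_{\bar\phi}q_n=(\overline A+\overline B)z^n-\overline A z^{n-1}\quad (n\ge1),\qquad T_{\bar\phi}q_0=\overline A+\overline B .
\]
Thus both $q_n$ and $T_{\bar\phi}q_n$ are supported on the monomials $z^{n-1},z^n$ (only $z^0$ for $n=0$).

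Second, I compute $\langle q_n,q_m\rangle_b$ for $n<m$. The supports $\{n-1,n\}$ and $\{m-1,m\}$ are disjoint unless $m=n+1$, and the same holds for the images under $T_{\bar\phi}$. Hence all such inner products vanish except possibly when $m=n+1$. In that case only the monomial $z^n$ is shared. The $H^2$ part contributes $1\cdot\overline{(-1)}=-1$, and the $T_{\bar\phi}$ part contributes $(\overline A+\overline B)\cdot\overline{(-\overline A)}=-A(\overline A+\overline B)$. This uniformly includes the case $n=0$.

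Therefore, for every $n$,
\[
\langle q_n,q_{n+1}\rangle_b=-(1+|A|^2)-A\overline B .
\]
So the family is orthogonal if and only if $A\overline B=-(1+|A|^2)$. Taking complex conjugates, this is the stated condition $\overline A B=-(1+|A|^2)$. The right-hand side is nonzero, which forces $A\neq0$.

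Finally, the norms follow from the same formulas. Solving the condition gives $B=-(1+|A|^2)/\overline A$, hence $A+B=-1/\overline A$. Then
\[
\|q_0\|_b^2=1+|A+B|^2=1+\frac{1}{|A|^2},
\]
and, for $n>0$,
\[
\|q_n\|_b^2=2+|A+B|^2+|A|^2=\Bigl(|A|+\frac1{|A|}\Bigr)^2 .
\]
There is no real obstacle here. The only step needing care is the telescoping computation of $T_{\bar\phi}q_n$ together with the index bookkeeping at $n=0$ versus $n\ge1$. Lemma~\ref{L:formulainnerproduct} could be used as a cross-check of these computations.
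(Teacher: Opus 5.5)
Your proof is correct, and it is organized differently from the paper's. The paper works on the Gram-matrix side. It uses Lemma~\ref{L:formulainnerproduct} to get $\langle z^j,z^k\rangle_b=\overline{A}B+|B|^2(1+j)$ for $j<k$ and notes that this is independent of $k$. So $\langle z^j,z^{k+1}\rangle_b=\langle z^j,z^k\rangle_b$, i.e.\ $z^j\perp z^k(z-1)$ for all $j<k$ and all $A,B$. It then evaluates the adjacent products as $\overline{A}B-1-|A|^2-2\operatorname{Re}(\overline{A}B)$ and concludes by separating real and imaginary parts.

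You instead work on the operator side. The same underlying cancellation (the Taylor coefficients $\phi_k$ are constant for $k\geq 1$) shows up as the telescoping identity $T_{\bar\phi}q_n=(\overline{A}+\overline{B})z^n-\overline{A}z^{n-1}$. After that, everything reduces to a support argument in $H^2$.

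Your route gives a uniform closed form $\langle q_n,q_{n+1}\rangle_b=-(1+|A|^2)-A\overline{B}$, valid for all $n\geq 0$. It agrees with the paper's expression, since $\overline{A}B-2\operatorname{Re}(\overline{A}B)=-A\overline{B}$. The equivalence is then immediate without the real/imaginary-part step, and the norms drop out as sums of squared coefficients.

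The paper's route, in turn, records the full Gram matrix and the row-difference identity $\langle z^j,z^{k+1}\rangle_b=\langle z^j,z^k\rangle_b$. This is exactly the elementary operation $R_j\mapsto R_j-R_{j+1}$ that is reused in Section~\ref{S:doublepoles} to derive the three-term recurrence for general $A,B$. Your support argument would give the analogous fact $q_{k+1}\perp z^j$ for $j<k$ just as easily, since $z^j$ and $T_{\bar\phi}z^j$ are both supported on $\{0,\dots,j\}$.
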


The orthonormal polynomials $\{p_n\}$ that we referred to can obviously be determined from $\{q_n\}$ dividing each polynomial $q_n$ by its provided norm.

\begin{proof}
The Taylor coefficients of $\phi$ are 
\[
\phi_n=\begin{cases}
    A+B ,& n=0,\\
    B,&n> 0.
\end{cases}
\]
In particular, by Lemma \ref{L:formulainnerproduct}, for $j\leq k$ we have that
\[
\langle z^j,z^k\rangle_b=\delta_{j,k} + \sum_{s=0}^j \overline{\phi_s}\phi_{k-j+s}=\delta_{j,k}+(\overline{A+B})\phi_{k-j} + |B|^2 j.
\]
We derive the formulas
\begin{equation*}
    \langle z^j,z^k\rangle_b= \begin{cases}
    1+|A+B|^2+|B|^2j ,& j=k,\\
   \overline{A}B+|B|^2(1+j),&j<k.
\end{cases}
\end{equation*}
This shows that, whenever $0\leq j<k$,
\begin{equation*}
    \langle z^j,z^{k+1}\rangle_b= \langle z^j,z^k\rangle_b,
\end{equation*}
which is equivalent to saying that $z^j \bot z^{k}(z-1)$. This already shows that $q_j\bot q_k$, whenever $0\leq j<k-1$, without any assumption on the coefficients $A,B$. To conclude, we show that for every $j\in\mathbb{N}$ we have the equivalence
\[q_j\bot q_{j+1}\iff \overline{A}B=-(1+|A|^2).\]
For $j=0$,
\[ \langle 1,z-1\rangle_b=\langle 1,z\rangle_b-\langle 1,1\rangle_b=\overline{A}B-1-|A|^2-2\operatorname{Re}(\overline{A}B).\]
By confronting real and imaginary parts, it is easy to see that $q_0\bot q_1$ if and only if $\overline{A}B=-(1+|A|^2)$. Now, for $j>0$, the orthogonality relation $z^j\bot z^{j+1}(z-1)$ gives 
\begin{align*}
    \langle z^j(z-1),z^{j+1}(z-1)\rangle_b&=\langle z^{j+1},z^{j+2}- z^{j+1}\rangle_b\\
    &=\overline{A}B-1-|A|^2-2\operatorname{Re}(\overline{A}B),
\end{align*}
and we conclude as in the case $j=0$. This concludes the first part of the proof. Now, assuming that $\overline{A}B=-(1+|A|^2)$, one has that $A+B=-1/\overline{A}$, and then $\|1\|_b^2=1+|A|^{-2}$. A direct computation shows that
\[
\|q_n\|_b^2=2+2|A|^2+|B|^2+2\operatorname{Re}(\overline{A}B)=|B|^2=\bigg(|A|+\frac{1}{|A|}\bigg)^2.
\]
\end{proof}

\subsection{Fricain--Mashreghi's work on Sarason's example} As the reader might imagine, Theorem \ref{T:rationalAB} was obtained after a process of trial and error. In any case, we were trying to understand properly an example treated in a classical work by Sarason. In \cite{Sarason1997LocalDS}, he proved that for a specific choice of a rational function $b$, the space $H(b)$ coincides with the local Dirichlet space
\[
\mathcal{D}_1=\{f\in\operatorname{Hol}(\mathbb{D})\colon \mathcal{D}_1(f):=\int_{\mathbb{D}} |f'(z)|^2\frac{1-|z|^2}{|1-z|^2}\operatorname{d}\!A(z)<\infty\},
\]
with an identity between the norms $\|\cdot\|_b=\|\cdot\|_{\mathcal{D}_1}$, where
\[\|f\|_{\mathcal{D}_1}^2:=\|f\|_2^2 + \mathcal{D}_1(f), \qquad f\in\mathcal{D}_1.\]
This was the starting point for a very interesting line of research. Several researchers have worked to understand when equality between $H(b)$ spaces and Dirichlet spaces may occur. It was later proved in \cite{costara2013} that the equality $H(b)=\mathcal{D}_1$ can be achieved when $b$ is the polynomial $b(z)=(1+z)/2$, with equivalence of norms $\|\cdot\|_b\approx \|\cdot\|_{\mathcal{D}_1}$. See also \cite{infinitelysupportedharmonicallyweighted,Chevrot2010,Eugenionext} for more literature on the equality $H(b)=\mathcal{D}_\mu$.

Choosing $b(z)=(1+z)/2$, then $a(z)=(1-z)/2$ and 
\[\phi(z)=\frac{1+z}{1-z}=-1+\frac{2}{1-z}, \qquad z\in\mathbb{D}.\]
Here, $\phi$ satisfies the algebraic relation in Theorem \ref{T:rationalAB}. In particular, the polynomials 
\begin{equation}\label{E:polyb=(1+z)/2}
    p_0=\frac{1}{\sqrt{2}}, \qquad p_n(z)=\frac{z^{n-1}(z-1)}{2},\quad n>0,
\end{equation} form an orthonormal basis for the de Branges--Rovnyak space $H(b)$. This has meaningful connections with a recent work by Fricain--Mashreghi. In \cite{fricainmashreghiorthonormalpoly}, it is shown that the sequence of polynomials given by 
\begin{equation}\label{D:q_n}
    Q_0=1, \quad Q_n(z)=1+(z-1)(a_0+a_1z+\ldots+a_{n-1}z^{n-1}), \quad n>0,
\end{equation}
is an orthogonal basis for $(\mathcal{D}_1,\|\cdot\|_{\mathcal{D}_1})$, where 
\[
a_k=\frac{1}{\sqrt{5}}\bigg(\frac{1+\sqrt{5}}{2}\bigg)^{2k+1}-\frac{1}{\sqrt{5}}\bigg(\frac{1-\sqrt{5}}{2}\bigg)^{2k+1}, \qquad k\geq 1.
\]
It is also shown that $\|Q_n\|_{\mathcal{D}_1}=\sqrt{a_{n}a_{n-1}}$. It is surprising that, endowing the space $H(b)=\mathcal{D}_1$ with the norm $\|\cdot\|_b$, one obtains the much simpler sequence of orthonormal polynomials that is given in \eqref{E:polyb=(1+z)/2}. As a sanity check, we are going to show the following claim:

\begin{proposition} Let $Q_n$ be as in \eqref{D:q_n}. Then,
  \[\|Q_n\|_{\mathcal{D}_1} \approx \|Q_n\|_{b}.\]
\end{proposition}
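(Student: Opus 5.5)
Since $\{p_n\}$ from \eqref{E:polyb=(1+z)/2} is an orthonormal basis of $H(b)$ for $b(z)=(1+z)/2$, the plan is to expand $Q_n$ in this basis and compute $\|Q_n\|_b$ exactly. We then compare it with $\|Q_n\|_{\mathcal{D}_1}=\sqrt{a_na_{n-1}}$, which Fricain--Mashreghi give explicitly. Write $Q_n(z)=1+(z-1)\sum_{k=0}^{n-1}a_kz^k$. By \eqref{E:polyb=(1+z)/2} we have $1=\sqrt2\,p_0$ and $(z-1)z^k=2p_{k+1}$. Hence
\[
Q_n=\sqrt2\,p_0+2\sum_{k=0}^{n-1}a_k\,p_{k+1},
\qquad
\|Q_n\|_b^2=2+4\sum_{k=0}^{n-1}a_k^2 .
\]
This reduces everything to an elementary comparison between $2+4\sum_{k<n}a_k^2$ and $a_na_{n-1}$. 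Here $a_0$ should be read off the same Binet-type formula, giving $a_0=1$.

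Next we use the explicit form $a_k=F_{2k+1}$ (odd-indexed Fibonacci numbers). Write $\varphi=(1+\sqrt5)/2$. Then $a_k=(\varphi^{2k+1}+\varphi^{-(2k+1)})/\sqrt5$, so $a_k\asymp\varphi^{2k}$ uniformly in $k$, with constants bounded above and below. Two consequences follow:
\[
\sum_{k=0}^{n-1}a_k^2\asymp\sum_{k=0}^{n-1}\varphi^{4k}\asymp\varphi^{4(n-1)},
\qquad
a_na_{n-1}\asymp\varphi^{4n-2}.
\]
The geometric sum is comparable to its last term because the ratio $\varphi^4>1$. Both quantities are therefore comparable to $\varphi^{4n}$. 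The additive constant $2$ is harmless, since both sides are bounded below by a positive constant. This gives $\|Q_n\|_{b}^2\asymp\|Q_n\|_{\mathcal{D}_1}^2$ for $n\ge1$, and the case $n=0$ is trivial.

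There is an optional sharper route, which I would use only if an exact identity is desired rather than mere comparability. The Fibonacci identity $\sum_{k=0}^{n-1}F_{2k+1}^2=F_{2n}F_{2n-1}$, or a similar one, would give an exact expression, perhaps $\|Q_n\|_b^2=2+4F_{2n}F_{2n-1}$. This could be compared directly with $F_{2n+1}F_{2n-1}$. I would verify such an identity by induction, but it is not needed.

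The main obstacle is essentially bookkeeping: confirming the normalization $a_0=1$ and that the formula indeed gives $a_k\asymp\varphi^{2k}$ with the stated indexing. Once the expansion in the orthonormal basis $\{p_n\}$ is written down, the rest is a routine geometric-series estimate.
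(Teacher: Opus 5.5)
Your argument is correct and follows the paper's route. Expanding $Q_n$ in the orthonormal basis \eqref{E:polyb=(1+z)/2} gives $\|Q_n\|_b^2=2+4\sum_{k=0}^{n-1}a_k^2$, and Binet's formula finishes the job. The paper sums the geometric series exactly to get sharp asymptotics, with the ratio $\|Q_n\|_b/\|Q_n\|_{\mathcal{D}_1}$ tending to $2\cdot 5^{-1/4}$, while your uniform bounds $a_k\asymp\varphi^{2k}$ already suffice for $\approx$.

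One caveat on your optional remark: the guessed identity is false. For $n=2$ one has $F_1^2+F_3^2=5\neq F_4F_3=6$. The correct identity is $\sum_{k=0}^{n-1}F_{2k+1}^2=(F_{4n}+2n)/5$, which is exactly the paper's computation, including its $\tfrac{2}{5}n$ term.
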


\begin{proof} 
One can express these polynomials $Q_n,n>0,$ in terms of the orthonormal basis $\{p_k\}_{k\leq n}$ described in \eqref{E:polyb=(1+z)/2} as
\[
Q_n(z)=1+\sum_{k=0}^{n-1}(z-1)a_kz^k=\sqrt{2}p_0(z)+ 2 \sum_{k=1}^n a_{k-1} p_k(z).
\]
Therefore, we can compute the norms $\|Q_n\|_b$ easily as
\begin{align*}
    \|Q_n\|_b^2&=2+4 \sum_{k=1}^n |a_{k-1}|^2 \\
    &= 2+ 4\sum_{k=0}^{n-1} \bigg|\frac{1}{\sqrt{5}}\bigg(\frac{1+\sqrt{5}}{2}\bigg)^{2k+1}-\frac{1}{\sqrt{5}}\bigg(\frac{1-\sqrt{5}}{2}\bigg)^{2k+1}   \bigg|^2.
\end{align*}
Expanding the square, the formula for the sum of geometric sequences yields
\begin{align*}
    \sum_{k=1}^n |a_{k-1}|^2&=\frac{3+\sqrt{5}}{10}\sum_{k=0}^{n-1}\bigg(\frac{1+\sqrt{5}}{2}\bigg)^{4k}+\frac{3-\sqrt{5}}{10}\sum_{k=0}^{n-1}\bigg(\frac{1-\sqrt{5}}{2}\bigg)^{4k}+\frac{2}{5}n\\
    &= \frac{3+\sqrt{5}}{10}\frac{1-\big(\frac{7+3\sqrt{5}}{2}\big)^n}{1-\frac{7+3\sqrt{5}}{2}}+\frac{3-\sqrt{5}}{10}\frac{1-\big(\frac{7-3\sqrt{5}}{2}\big)^n}{1-\frac{7-3\sqrt{5}}{2}}+\frac{2}{5}n.
\end{align*}
In particular, as $n\to\infty,$
\[
\|Q_n\|_b= (1+o(1)) \frac{2\sqrt[4]{5}}{5}\bigg(\frac{3+\sqrt{5}}{2}\bigg)^n,
\]
which is coherent with the fact that 
\[
\|Q_n\|_{\mathcal{D}_1} =(1+o(1)) \frac{1}{\sqrt{5}}\bigg(\frac{3+\sqrt{5}}{2}\bigg)^n,
\]
as found by Fricain--Mashreghi.
\end{proof}

\subsection{Composition with a monomial}
Given a symbol $b$, we study the orthonormal polynomials of the de Branges--Rovnyak space associated to the composition $b(z^N)$, for $N\in\mathbb{N}$. As a byproduct, we complete the example discussed in the previous section and we prove Theorem \ref{T: 1+z^N}, explicitly determining a basis of polynomials for $H(b),$ when $b(z)=(1+z^N)/2$. Moreover, this analysis leads to interesting general questions on composition operators on de Branges--Rovnyak spaces, that we will discuss at the end.

We begin with a general result concerning \emph{inner} functions, that may be already known to experts in the field. However, we could not find a reference. A function $u\in H^\infty$ is inner if $|u|=1$ $m$-a.e. on $\mathbb{T}$.

\begin{proposition}
    Let $b$ be a non-extreme function in $H^\infty_1$ and $a$ its Pythagorean mate. If $u$ is an inner function such that $u(0)=0$, then the composition $b\circ u$ is a non-extreme function of $H^\infty_1$ and its Pythagorean mate is $a\circ u$.
\end{proposition}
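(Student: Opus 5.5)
The plan is to check directly the three properties that characterize the Pythagorean mate: $a\circ u$ is outer, it has positive value at the origin, and $|a\circ u|^2+|b\circ u|^2=1$ $m$-a.e. on $\mathbb{T}$. Once these hold, the uniqueness of the Pythagorean mate settles the second claim. Non-extremality of $b\circ u$ then comes for free from the equivalence recalled in Section~\ref{S:preliminaries}: the existence of $a\circ u\in H^\infty$ with $|a\circ u|^2+|b\circ u|^2=1$ a.e. implies $\int\log(1-|b\circ u|^2)\,dm=\int\log|a\circ u|^2\,dm>-\infty$. This uses that an outer function has integrable log-modulus. Clearly $b\circ u\in H^\infty_1$, and $(a\circ u)(0)=a(u(0))=a(0)>0$.

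For the boundary identity, I would use the classical fact that for an inner $u$ with $u(0)=0$ the boundary map $u\colon\mathbb{T}\to\mathbb{T}$ preserves Lebesgue measure $m$. This follows because the Poisson integral of $m$ composed with $u$ is harmonic, and the pushforward $u_*m$ has Fourier coefficients $\int u^n\,dm=u(0)^n=0$ for $n\geq1$. I also need that the nontangential boundary values of $h\circ u$ equal $h^*\circ u^*$ a.e. for $h\in H^\infty$. This can be justified by Lindelöf-type arguments, or, more cleanly, by observing that $h\circ u$ is the Poisson integral of $h^*\circ u^*$. The latter is the standard Littlewood subordination / measure-preservation fact, and it extends by density from polynomials, where it is trivial. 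Since $|a^*|^2+|b^*|^2=1$ off an $m$-null set $E$, and $(u^*)^{-1}(E)$ is $m$-null by measure preservation, the identity transfers to $a\circ u$, $b\circ u$ a.e.

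The main obstacle is showing that $a\circ u$ is outer. I would use the characterization that $F\in H^\infty$ is outer if and only if $\log|F(0)|=\int_{\mathbb{T}}\log|F^*|\,dm$ (and $F\not\equiv0$). For $F=a\circ u$, we have $\log|F(0)|=\log|a(0)|=\int\log|a^*|\,dm$ because $a$ is outer. By measure preservation,
\[
\int_{\mathbb{T}}\log|a^*\circ u^*|\,dm=\int_{\mathbb{T}}\log|a^*|\,d(u_*m)=\int_{\mathbb{T}}\log|a^*|\,dm,
\]
which is legitimate since $\log|a^*|\in L^1$. These two expressions coincide, so $a\circ u$ is outer. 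An alternative route, if preferred, is to write $\log|a|$ as the Poisson integral of $\log|a^*|$ and note that $\log|a\circ u|$ is then harmonic and equals the Poisson integral of $\log|a^*|\circ u^*$, again by subordination. Combining the steps, $(b\circ u,a\circ u)$ is a Pythagorean pair, which proves the proposition.
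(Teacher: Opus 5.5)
Your proof is correct and takes the same route as the paper: the key input in both is that an inner function with $u(0)=0$ preserves Lebesgue measure on $\mathbb{T}$, which transfers the identity $|a|^2+|b|^2=1$ a.e.\ from $(b,a)$ to $(b\circ u, a\circ u)$. You also justify two points the paper leaves out. The first is the boundary-value identity $(h\circ u)^*=h^*\circ u^*$, which the paper leaves implicit. The second is the outerness of $a\circ u$, which you prove via $\log|F(0)|=\int_{\mathbb{T}}\log|F^*|\,dm$ and measure preservation, whereas the paper simply calls it trivial.
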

\begin{proof}
    That $b\circ u$ is well-defined and in $H^\infty_1$ is trivial. It is also trivial that $a\circ u$ is outer and $a(u(0))=a(0)>0$. That $(b\circ u,a\circ u)$ is a Pythagorean pair is a consequence of the fact that inner functions that vanish at $0$ preserve the Lebesgue measure, that is, $m(E)=m(u^{-1}(E))$ (see for example \cite[page 215]{cima2006cauchy} for a proof that uses Clark measures). Indeed, the set 
    \[
    \{\zeta\in\mathbb{T}\colon |b(u(\zeta))|^2+|a(u(\zeta))|^2=1\}=u^{-1}\left(\{\zeta\in\mathbb{T}\colon |b(\zeta)|^2+|a(\zeta)|^2=1\}\right)
    \]
    has full measure.
\end{proof}
Given $\psi\colon\mathbb{D}\to\mathbb{D}$ analytic, we denote $C_\psi f:=f\circ \psi$ the associated composition operator. Given a Pythagorean pair $(b,a)$, $\phi=b/a$ and a fixed positive natural number $N$, we consider the Pythagorean pair given by $C_{z^N}b(z)=b(z^N)$ and $C_{z^N}a(z)=a(z^N)$. Their quotient is
\[
\frac{b(z^N)}{a(z^N)}=\phi(z^N)=C_{z^N}\phi(z), \qquad z\in \mathbb{D}.
\]
Whenever we talk about \emph{the} orthonormal polynomials, we will mean those obtained by applying the Gram-Schmidt method to the basis of monomials. We are now going to derive the orthonormal polynomials $\{q_n\}_n$ of the space $H(C_{z^N}b)$ in terms of the orthonormal polynomials $\{p_n\}_n$ of the space $H(b)$. 
\begin{theorem}\label{T:z^N}
    Let $(b,a)$ be a Pythagorean pair, $\phi=b/a$,$\{p_n\}_n$ the set of orthonormal polynomials in $H(b)$, $N\in\mathbb{N}, N>0,$ as above. Consider the orthonormal basis $\{q_n\}_n$  of polynomials of $H(C_{z^N}b)$. Then, for every $l\in\mathbb{N}$ and for every $0\leq i<N$, we have that 
    \[q_{Nl+i}(z)=z^ip_l(z^N), \qquad z\in\mathbb{D}.\]
\end{theorem}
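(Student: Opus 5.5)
The plan is to compute the Gram matrix of monomials in $H(C_{z^N}b)$ from Lemma \ref{L:formulainnerproduct}, and then use the characterization of orthonormal polynomials recalled at the end of Section \ref{S:preliminaries}: the unique orthonormal sequence with $\deg q_n=n$ and positive leading coefficient.

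\textbf{Step 1: the Taylor coefficients.} The symbol of $H(C_{z^N}b)$ is $\psi(z):=\phi(z^N)$. Its Taylor coefficients are $\psi_{Nm}=\phi_m$, and $\psi_n=0$ whenever $N\nmid n$.

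\textbf{Step 2: the Gram matrix.} For $j\le k$, Lemma \ref{L:formulainnerproduct} gives
\[
\langle z^j,z^k\rangle_{C_{z^N}b}=\delta_{j,k}+\sum_{s=0}^{j}\overline{\psi_s}\psi_{k-j+s}.
\]
A summand is nonzero only if $N\mid s$ and $N\mid k-j+s$, which forces $N\mid k-j$. Write $j=Nl+i$ and $k=Nl'+i'$ with $0\le i,i'<N$.
\begin{itemize}
\item If $i\neq i'$, then $N\nmid k-j$, so the inner product vanishes.
\item If $i=i'$, then $l\le l'$. Substituting $s=Nt$ with $0\le t\le l$ (since $Nt\le Nl+i$ exactly when $t\le l$) gives
\[
\langle z^{Nl+i},z^{Nl'+i}\rangle_{C_{z^N}b}=\delta_{l,l'}+\sum_{t=0}^{l}\overline{\phi_t}\phi_{l'-l+t}=\langle z^l,z^{l'}\rangle_b .
\]
\end{itemize}
By conjugate symmetry the same holds for $j>k$. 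Thus $\langle z^ip(z^N),z^{i'}q(z^N)\rangle_{C_{z^N}b}=\delta_{i,i'}\langle p,q\rangle_b$ for all polynomials $p,q$, by sesquilinearity.

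\textbf{Step 3: orthonormality.} Set $\tilde q_{Nl+i}(z):=z^ip_l(z^N)$. Step 2 gives
\[
\langle \tilde q_{Nl+i},\tilde q_{Nl'+i'}\rangle=\delta_{i,i'}\langle p_l,p_{l'}\rangle_b=\delta_{i,i'}\delta_{l,l'},
\]
so the family $\{\tilde q_n\}_n$ is orthonormal. Each $\tilde q_{Nl+i}$ has degree $Nl+i$, and its leading coefficient is $c^{(l)}_l>0$.

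\textbf{Step 4: identification with $\{q_n\}_n$.} The Gram--Schmidt output is the unique orthonormal sequence with $\deg q_n=n$ and positive leading coefficient. This holds because $\{\tilde q_0,\dots,\tilde q_n\}$ spans $\mathcal P_n$, so $\tilde q_n$ is orthogonal to $\mathcal P_{n-1}$, has norm $1$, and has positive leading coefficient. Hence $\tilde q_n=q_n$ by induction.

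The only step needing care is the index bookkeeping in Step 2: checking that the summation range $s\le j$ becomes exactly $t\le l$. No step is a genuine obstacle.
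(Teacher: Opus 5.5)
Your proof is correct and follows the paper's argument: you compute the Gram matrix of $H(C_{z^N}b)$ via Lemma \ref{L:formulainnerproduct}, obtain $\langle z^{Nl+i},z^{Nl'+i'}\rangle_{C_{z^N}b}=\delta_{i,i'}\langle z^l,z^{l'}\rangle_b$, and conclude from the uniqueness of the Gram--Schmidt polynomials. The differences are cosmetic: the paper checks orthogonality of $z^ip_l(z^N)$ against the monomials $z^s$ with $s<Nl+i$ directly, whereas you check pairwise orthonormality of the whole family and then use the triangular span, and your conjugate-symmetry treatment of the case $j>k$ makes explicit a step the paper leaves implicit.
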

\begin{proof}
   For simplicity, we set $\widetilde{\phi}:=C_{z^N}\phi, \widetilde{b}:=C_{z^N}b$. If the Taylor coefficients of $\phi$ are given by $\phi(z) = \sum_{k=0}^\infty \phi_k z^{k}$, then by the formula $\widetilde{\phi}(z)= \sum_{k=0}^\infty \phi_k z^{kN}$ we know that the Taylor coefficients of $\widetilde{\phi}$ are
\[
    \widetilde{\phi}_k=\begin{cases} 0 ,& k\not\equiv 0 \mod N,\\
    \phi_{\frac{k}{N}},& k \equiv 0 \mod N.
\end{cases}
\]
Given a real number $x$, we denote its integer part by
\(\lfloor x \rfloor :=\max\{k\in\mathbb{Z}\colon k\leq x\}\). By Lemma \ref{L:formulainnerproduct}, for $j,k \in \N$, $j\leq k$, we have that
\begin{align*}
\langle z^j,z^k\rangle_{\widetilde{b}}&
= \delta_{j,k} + \sum_{s=0}^{j}  \overline{\widetilde{\phi}_s}\widetilde{\phi}_{k-j+s}\\
&=\delta_{j,k} +\sum_{s=0}^{N \lfloor j/N \rfloor} \overline{\widetilde{\phi}_s}\widetilde{\phi}_{k-j+s}\\
&=\delta_{j,k} +\sum_{s=0}^{\lfloor j/N  \rfloor}\overline{\widetilde{\phi}_{sN}}\widetilde{\phi}_{k-j+sN}.
\end{align*}
In particular, note that if $k\not \equiv j\mod N,$ then $\langle z^j,z^k\rangle_{\widetilde{b}}=0$, as $\widetilde{\phi}_{k-j+sN}=0$ for every $s$. Assuming that $k \equiv j \mod N,$ it is more convenient to calculate the inner products of $z^{Nj+i}$ and $z^{Nk+i}$, where $j\leq k$ are natural numbers and $0\leq i<N$:
\begin{align*}
\langle z^{Nj+i},z^{Nk+i}\rangle_{\widetilde{b}}&
= \delta_{Nj+i,Nk+i} + \sum_{s=0}^{j}\overline{\widetilde{\phi}_{sN}}\widetilde{\phi}_{Nk-Nj+sN}\\
&= \delta_{j,k} +\sum_{s=0}^{j}\overline{\phi_{s}}\phi_{k-j+s}\\
&=\langle z^j,z^k\rangle_{b}.
\end{align*} 
This formula holds independently of $0\leq i<N$. Using this, now we show that $q_{Nl+i}(z)=z^ip_l(z^N)$ for every $l\in\mathbb{N}$ and $0\leq i<N$, by showing that $z^ip_l(z^N)$ is orthogonal to every monomial $z^s$ with $s<Nl+i$. Let 
\[p_l(z)=\sum_{k=0}^l c_k^{(l)}z^k,\qquad z\in\mathbb{D}.\] We have that
\[
\langle z^ip_l(z^N),z^s\rangle_{\widetilde{b}}=\sum_{k=0}^l c_k^{(l)}\langle z^{Nk+i},z^s\rangle_{\widetilde{b}}=0
\]
whenever $s\not \equiv i \mod N$. If else $s=Nm+i$ with $m<l$, then
\begin{align*}
\langle z^ip_l(z^N),z^{Nm+i}\rangle_{\widetilde{b}} &=\sum_{k=0}^l c_k^{(l)}\langle z^{Nk+i},z^{Nm+i}\rangle_{\widetilde{b}} \\ &= \sum_{k=0}^l c_k^{(l)}\langle z^{k},z^{m}\rangle_{b}=\langle p_l,z^m\rangle_b=0.
\end{align*}
Notice also that 
\begin{align*}
\|z^ip_l(z^N)\|_{\widetilde{b}}^2 &=\sum_{k,j=0}^l c_k^{(l)}\overline{c_j^{(l)}}\langle z^{Nk+i},z^{Nj+i}\rangle_{\widetilde{b}} \\ &= \sum_{k,j=0}^l c_k^{(l)}\overline{c_j^{(l)}}\langle z^{k},z^{j}\rangle_{b}=\|p_l\|_{b}^2=1.
\end{align*}
\end{proof}

As a corollary, we immediately deduce Theorem \ref{T: 1+z^N}.

\begin{proof}[Proof of Theorem \ref{T: 1+z^N}]
    The proof follows from \eqref{E:polyb=(1+z)/2} and Theorem \ref{T:z^N}.
\end{proof}

As a consequence of Theorem \ref{T:z^N}, we also obtain the following result, concerning the structure of the de Branges--Rovnyak space $H(C_{z^N}b)$.  
\begin{theorem}\label{T:H(b(z^N))}
    Let $b\in H^\infty_1$ non-extreme and $N\in\mathbb{N}$. Then, the operator $C_{z^N}\colon H(b)\to H(C_{z^N}b), C_{z^N}f(z)=f(z^N),z\in\mathbb{D},$ is a linear isometry. Moreover,
    \[
    H(C_{z^N}b)=\bigoplus_{j=0}^{N-1} M_z^jC_{z^N}H(b),
     \]
    where $M_z$ denotes the shift operator. The direct sum is intended with respect to the inner product of $H(C_{z^N}b)$. 
\end{theorem}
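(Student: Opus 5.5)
The plan is to use Theorem \ref{T:z^N} together with the density of polynomials in non-extreme $H(b)$ spaces. Throughout we write $\widetilde{b}:=C_{z^N}b$. By the Proposition on inner functions, $\widetilde{b}$ is non-extreme, so polynomials are dense in both $H(b)$ and $H(\widetilde{b})$. The case $N=0$ is degenerate, since then $C_{z^0}b$ is constant, so we assume $N\ge 1$.

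\emph{Step 1: $C_{z^N}$ is an isometry.} Theorem \ref{T:z^N}, with $i=0$, gives $C_{z^N}p_l=q_{Nl}$. Thus $C_{z^N}$ sends the orthonormal basis $\{p_l\}_l$ of $H(b)$ onto the orthonormal family $\{q_{Nl}\}_l$ in $H(\widetilde{b})$. By linearity it is isometric on polynomials. For general $f\in H(b)$, take polynomials $f_n\to f$ in $H(b)$. Then $C_{z^N}f_n$ is Cauchy in $H(\widetilde{b})$, so it converges to some $g$. Both norms dominate pointwise evaluation, since these are reproducing kernel Hilbert spaces. Hence $f_n\to f$ pointwise, $f_n(z^N)\to f(z^N)$ pointwise, and also $f_n(z^N)\to g(z)$ pointwise, so $g=C_{z^N}f$. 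This shows that $C_{z^N}$ maps $H(b)$ into $H(\widetilde{b})$ isometrically.

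\emph{Step 2: the decomposition.} For $0\le j<N$ set $V_j:=M_z^jC_{z^N}H(b)$. We first check that $V_j$ is the closed span of $\{q_{Nl+j}\}_l$. For a polynomial $p$, Theorem \ref{T:z^N} gives $M_z^jC_{z^N}p=\sum_l c_l q_{Nl+j}$ with $\sum_l |c_l|^2=\|p\|_b^2$. So $f\mapsto z^jf(z^N)$ extends, as in Step 1, to an isometry from $H(b)$ onto $\overline{\operatorname{span}}\{q_{Nl+j}\}_l$. Its range is closed, being the isometric image of a complete space. The pointwise limit argument identifies the extension with $M_z^jC_{z^N}$; this also shows that $M_z^jC_{z^N}H(b)\subset H(\widetilde{b})$ without appealing to boundedness of $M_z$. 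The families $\{q_{Nl+j}\}_l$ for different $j$ are disjoint subfamilies of one orthonormal basis, so the $V_j$ are mutually orthogonal. Since $\{q_n\}_n$ is complete in $H(\widetilde{b})$, by density of polynomials and Gram--Schmidt, the orthogonal sum $\bigoplus_j V_j$ is all of $H(\widetilde{b})$.

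\emph{Main obstacle.} The main obstacle is the passage from polynomials to the whole space, that is, showing that the abstract isometric extension coincides with the concrete map $f\mapsto z^jf(z^N)$. The approach above resolves this with continuity of point evaluations in both reproducing kernel Hilbert spaces. Everything else follows directly from Theorem \ref{T:z^N}.
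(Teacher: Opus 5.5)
Your proposal is correct and follows essentially the same route as the paper. Both arguments transport the orthonormal basis via Theorem \ref{T:z^N} ($z^jp_l(z^N)=q_{Nl+j}$), use continuity of point evaluations to identify the norm limit with $f\mapsto z^jf(z^N)$, and get orthogonality and completeness from splitting $\{q_n\}_n$ into residue classes mod $N$; the only minor difference is that you extend from polynomials by a Cauchy-sequence argument instead of expanding directly in $\{p_n\}_n$, which lets you show $M_z^jC_{z^N}H(b)\subset H(C_{z^N}b)$ without invoking boundedness of $M_z$.
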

\proof Again, we introduce the notation $\widetilde{b}(z):=b(z^N),z\in\mathbb{D}.$
    First, we show that $C_{z^N}H(b)\subset H(\widetilde{b})$. Given $h\in H(b)$, there exist coefficients $(a_n)_{n\in\mathbb{N}}$ such that $h=\sum_n a_n p_n,$ where $\{p_n\}_n$ is the orthonormal basis of polynomials of $H(b)$. Such series converges in $H(b)$ and $\sum_n|a_n|^2=\|f\|_b^2<\infty$. Since pointwise evaluations are continuous operators on $H(b)$, we have that
    \[
    h(z^N)=\sum_n a_n p_n(z^N)=\sum_n a_n q_{nN}(z),\qquad z\in\mathbb{D},
    \]
    where the polynomials $\{q_k\}_k$ are defined as in Theorem \ref{T:z^N}. In particular, $C_{z^N}b\in H(\widetilde{b})$ and
    \[
    \|C_{z^N}h\|_{\widetilde{b}}^2=\sum_n|a_n|^2=\|h\|_b^2.
    \]
    This shows that $C_{z^N}H(b)\subset H(\widetilde{b})$ and it is a closed subspace in the norm topology of $H(\widetilde{b})$. Since the shift operator is well-defined on de Branges--Rovnyak spaces with non-extreme symbols, we have that 
    \[H(\widetilde{b})\supset \bigcup_{j=0}^{N-1} M_z^j C_{z^N}H(b).\]
    Next, we show that $M_{z}^{j_1}C_{z^N}H(b) \bot M_{z}^{j_2}C_{z^N}H(b),$ for $0\leq j_1,j_2<N, j_1\neq j_2$. For $i=1,2$, let $f_i\in M_z^{j_i}C_{z^N}H(b).$ There exist functions $h_i\in H(b),i=1,2$, such that
    \[
    f_i(z)=z^{j_i}h_i(z^N), \qquad z\in\mathbb{D}, \quad i=1,2.
    \]
    Representing $h_i$ with respect to the orthonormal basis of $H(b)$, there also exist coefficients $(a_n^{(i)})_{n\in\mathbb{N}}$ such that
     \[
    f_i(z)=z^{j_i}\sum_n a_n^{(i)}p_n(z^N), \qquad z\in\mathbb{D}, \quad i=1,2.
    \]
    Then,
    \begin{align*}
        \langle f_1,f_2\rangle_{\widetilde{b}}&=\sum_{k,n=0}^\infty a_n^{(1)}\overline{a_n^{(2)}}\langle z^{j_1}p_n(z^N),z^{j_2}p_n(z^N)\rangle_{\widetilde{b}}=0,
    \end{align*}
    by Theorem \ref{T: 1+z^N}. To conclude, we have to show that
     \[H(\widetilde{b})\subset \bigoplus_{j=0}^N M_z^jC_{z^N}H(b).\]
     For $f\in H(\widetilde{b})$, we can write the Fourier basis associated to the orthonormal polynomials $\{q_k\}_k$ as $f=\sum_n F_n q_n.$ In particular, by linearity,
     \begin{align*}
         f(z)&=\sum_{l=0}^\infty\sum_{j=0}^{N-1}F_{Nl+i}q_{Nl+i}(z)=\sum_{j=0}^{N-1}z^j\sum_{l=0}^\infty F_{Nl+j}p_l(z^N), \qquad z\in\mathbb{D}.
     \end{align*}
    Note that for $0\leq j<N$ the function 
    \[
    g_j(z)=\sum_{l=0}^\infty F_{Nl+j}p_l(z), \qquad z\in\mathbb{D},
    \]
    belongs to $H(b),$ since 
    \[
    \sum_{l=0}^\infty|F_{Nl+j}|^2\leq \sum_{n=0}^\infty|F_n|^2=\|f\|_b^{2}, \qquad 0\leq j<N,   
    \]
    and $\{p_l\}_{l}$ is an orthonormal basis of $H(b)$. We conclude that $f(z)=\sum_{j=0}^{N-1}z^jg_j(z^N),z\in\mathbb{D}$, with $g_j\in H(b),$ thus
     \[
    H(C_{z^N}b)=\bigoplus_{j=0}^{N-1} M_z^j\{C_{z^N}f\colon f\in H(b)\}.\qedhere
    \]

We remark that, in general, it is not trivial to establish whether a function~$h$ belongs to a certain $H(b)$ space. In the previous proof, we showed that $C_{z^N}h\in H(\widetilde{b})$ and then that $g_j\in H(b),j=1,2,$ only using the information gathered on the orthonormal polynomials of de Branges--Rovnyak spaces in Theorem \ref{T:z^N}.

\subsection{Further remarks}

Theorem \ref{T:H(b(z^N))} and its proof raise some interesting questions: 

\begin{problem}
Given a more general function $\psi\colon \mathbb{D}\to\mathbb{D}$, and $b \in H^\infty_1$, what can be said about the de Branges--Rovnyak space $H(C_\psi b)$ and its relation with $H(b)$? \end{problem}

Our analysis provides rather precise answers in the case where $\psi(z)=z^N$ (and $b$ is non-extreme), but this matter can clearly be studied in greater generality. We propose two further problems. The first one is about the bounded embedding: 
\begin{problem}
Is it always true that
\(
C_\psi H(b)\subseteq H(C_\psi b) 
\)
and if so, is
\[
C_\psi\colon H(b)\to H(C_\psi b)
\]
a bounded operator? 
\end{problem}

The second is perhaps even more ambitious:

\begin{problem}
Is there a way to express $H(C_\psi b)$ in terms of $H(b)$?  
\end{problem}

Similar questions are addressed in the context of model spaces, when both $b$ and $\psi$ are inner, in Chapter $6$ of \cite{model} and references therein.

\section{Recurrences and underlying matrix structure} \label{S:doublepoles}

\subsection{Poles of degree 1 in general position} \label{S:phicomplete} 
The restriction on the values of $A$ and $B$ in Theorem \ref{T:rationalAB} seems an unnatural place to stop, because a very simple generalization of the Sarason space leads to a different rational $\phi$ of degree 1. Instead of considering $b=(1+z)/2$, we consider $b=(1+B)/2$, where $B$ is a Blaschke factor having the real zero $c\in(-1,1)$, that is,
\[
B(z)=\frac{z-c}{1-cz}, \qquad z\in\mathbb{D}.
\]
It is perhaps more common to consider as a Blaschke factor $-B$, but our choice turns out to be more convenient for our purposes. In this case, $a=(1-B)/2$ and
\[
\phi(z)=\frac{1+\frac{z-c}{1-cz}}{1-\frac{z-c}{1-cz}}=\frac{1-c}{1+c}\bigg(-1+\frac{2}{1-z}\bigg), \qquad z\in\mathbb{D}.
\]
Notice that this $\phi$ satisfies the hypothesis of Theorem \ref{T:rationalAB} if and only if 
\[
2\frac{1-c}{1+c}=\bigg(1+\frac{(1-c)^2}{(1+c)^2}\bigg)\frac{1+c}{1-c},
\]
which happens if and only if $c=0$. That happens precisely when $b=(1+z)/2$. In particular, this reflects the fact that the condition of Theorem \ref{T:rationalAB} is not preserved when multiplying $\phi$ by a constant. To discuss this example, we consider the more general function \eqref{eqn1000}.
We are going to show that the structure of the orthonormal polynomials of $H(b)$ is dramatically different from the easy case that we already discussed.

\begin{theorem}\label{T:recurrencerelation}
Let $A, B \in \C$, $B \neq 0$, and $\phi$ as in \eqref{eqn1000} with $\zeta=1$. Let $\{p_n\}_{n\in \N}$ be the associated orthonormal basis for $H(b)$, fix $n\in \N$, $n \geq 2$, and denote $p_n(z)= \sum_{k=0}^n c_k z^k$, with $c_n >0$. Then $\{c_k\}_{k=0}^{n-1}$ satisfy the 3-term recurrence relation 
\[\overline{T_0} c_{k+1} + (T_1-T_0) c_k + T_0 c_{k-1}=0, \qquad k = 1, \dots, n-2,\]
where $T_0= 1+|A|^2 +A\overline{B}$ and $T_1= -\overline{T_0}-|B|^2$.
\end{theorem}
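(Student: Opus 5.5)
The plan is to extract the recurrence directly from the $n$ orthogonality conditions $\langle p_n,z^m\rangle_b=0$, $m=0,\dots,n-1$, recalled at the end of Section \ref{S:preliminaries}. Taking a second difference in $m$ should make almost everything cancel.

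First I will record the Gram entries. Exactly as in the proof of Theorem \ref{T:rationalAB}, the Taylor coefficients of $\phi$ are $\phi_0=A+B$ and $\phi_n=B$ for $n>0$. Lemma \ref{L:formulainnerproduct} then gives, for $j<k$,
\[
\langle z^j,z^k\rangle_b=\overline{A}B+|B|^2(1+j),\qquad \langle z^j,z^j\rangle_b=1+|A+B|^2+|B|^2j .
\]
By conjugate symmetry, $\langle z^k,z^m\rangle_b=A\overline{B}+|B|^2(1+m)$ when $k>m$.

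Set $E_m:=\langle p_n,z^m\rangle_b=\sum_{k=0}^n c_k\langle z^k,z^m\rangle_b$, so that $E_m=0$ for $0\le m\le n-1$.

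The first key step is the first difference $D_m:=E_m-E_{m-1}$ for $1\le m\le n-1$. I compute $\langle z^k,z^m\rangle_b-\langle z^k,z^{m-1}\rangle_b$ case by case in $k$, writing $\beta:=|B|^2$.
\begin{itemize}
\item For $k>m$ the difference is $\beta$.
\item For $k<m-1$ it is $0$, because $\langle z^k,z^m\rangle_b$ does not depend on $m$ when $k<m$.
\item For $k=m$ it is $1+|A+B|^2-A\overline{B}=\overline{T_0}+\beta$, after expanding $|A+B|^2=|A|^2+2\operatorname{Re}(A\overline B)+\beta$.
\item For $k=m-1$ it is $\overline{A}B+\beta-1-|A+B|^2=-T_0$.
\end{itemize}
Hence
\[
D_m=-T_0c_{m-1}+(\overline{T_0}+\beta)c_m+\beta S_m,\qquad S_m:=\sum_{k=m+1}^n c_k .
\]

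The second step is to take $D_{m+1}-D_m$ for $1\le m\le n-2$, so that both $m$ and $m+1$ lie in $\{1,\dots,n-1\}$; this range is exactly what produces $k=1,\dots,n-2$ in the statement. The tail sums collapse because $S_{m+1}-S_m=-c_{m+1}$. The $\beta c_{m+1}$ terms then cancel and we are left with
\[
0=\overline{T_0}c_{m+1}+(-T_0-\overline{T_0}-\beta)c_m+T_0c_{m-1},
\]
and $-\overline{T_0}-\beta-T_0=T_1-T_0$, which is the claimed recurrence.

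The only delicate point I expect is the bookkeeping of the diagonal and near-diagonal cases $k\in\{m-1,m\}$. There one must use the conjugated formula for $k>m$, and the quantities $T_0$ and $\overline{T_0}$ must come out on the correct sides; a sign or conjugation slip there would spoil the match with $T_0,T_1$. The tail term involves the top coefficient $c_n$, but it is harmless because it disappears after the second difference.
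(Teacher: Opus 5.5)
Your proposal is correct and is essentially the paper's argument. The paper writes the orthogonality conditions as a linear system in the Gram entries $\langle z^j,z^k\rangle_b$ and twice replaces each row $R_j$ by $R_j-R_{j+1}$, which is your second difference $D_{m+1}-D_m$ up to sign and an index shift: your $-D_{k+1}=0$ is exactly the paper's intermediate row $T_0c_k+T_1c_{k+1}-|B|^2\sum_{j=k+2}^n c_j=0$. The paper's normalization equation $\langle p_n,z^n\rangle_b=1/c_n$ plays no role in the recurrence, so you lose nothing by omitting it.
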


Before the proof, we highlight that one simple operation such as scaling $\phi$ by a constant factor, results in a significant increase of complexity in the expression of a basis of orthonormal polynomials. Notice in particular that the orthogonal polynomials when $b(z)=(1+z)/2$ had (at most) 2 non-zero coefficients, where any other space in this family yields polynomials of arbitrary length.

After the proof of the theorem, we will show how to obtain a complete description of the families of polynomials $\{p_n\}_{n\in\mathbb{N}}$ in terms of $A, B$ and $n$ exclusively. However, we recommend to keep the following in mind:
\begin{remark}\label{remark100}
 The coefficients of $p_n$ in Theorem \ref{T:recurrencerelation} cannot be expressed in a short unified manner since their form depends on whether the polynomial \begin{equation}\label{eqnQ}
Q(z) =\overline{T_0} z^2 + (T_1-T_0) z + T_0 
\end{equation} has 
\begin{itemize}
    \item[(i)] degree 1, meaning $A\overline{B}=-(1+|A|^2)$, dealt with in Theorem \ref{T:rationalAB};
    \item[(ii)] two simple roots $\lambda_1$ and $\lambda_2= \frac{T_0}{\overline{T_0}\lambda_1}$ in $\C\backslash\{0,1\}$;
    \item[(iii)] a double root $\lambda = \frac{T_0}{|T_0|} \in \T \backslash \{1\}$.
\end{itemize}
\end{remark}

Now, we are ready to proceed with the proof.

\begin{proof}[Proof of Theorem \ref{T:recurrencerelation}]
The Taylor coefficients of $\phi$ are 
\[
\phi_n=\begin{cases}
    A+B,& n=0,\\
  B,&n> 0.
\end{cases}
\]
An application of Lemma \ref{L:formulainnerproduct} shows that
\begin{equation*}
    \langle z^j,z^k\rangle_b= \begin{cases}
    1 + |A+B|^2 + |B|^2k,& j=k,\\
    \overline{A}B + (j+1)|B|^2,&j<k, \\
        A\overline{B} + (k+1)|B|^2,&j>k. \\
\end{cases}
\end{equation*}
For the sake of convenience, we introduce the notation $\rho:=1+|A+B|^2$. The orthogonality conditions $\langle p_n,z^k\rangle_b=0, k=0,\ldots, n-1,$ define $n$ equations for our $n+1$ variables, that give relations between the coefficients $c_0, \ldots, c_{n}$ of $p_n$. 
For $0 \leq k\leq n-1$, we have that
\begin{align*}
\langle p_n,z^k\rangle_b &= \sum_{j=0}^{k-1} c_j\big(\overline{A}B+|B|^2(1+j)\big) + c_k (\rho+|B|^2k) +\\
&\qquad +\big(A\overline{B}+|B|^2(1+k)\big)\sum_{j=k+1}^n c_j=0.
\end{align*}

On the other hand, notice that $1=\|p_n\|_b^2=c_n\langle p_n,z^n\rangle_b$ (where we used that $c_n>0$), and so if we introduce the mute variable $t=1/c_n (\in \R^+)$ we obtain \begin{align*}
\langle p_n,z^{n}\rangle_b &= \sum_{j=0}^{n-1} c_j\big(\overline{A}B+|B|^2(1+j)\big)+ (\rho+n|B|^2) c_n=t.
\end{align*}
 We find that the vector $(c_0,\ldots, c_{n})$ solves the $(n+1)\times(n+1)$ system represented by the augmented matrix
\[
\left(\begin{array}{ccccc|c}
\rho & A\overline{B}+|B|^2 & A\overline{B}+|B|^2 &  \cdots & A\overline{B}+|B|^2 & 0\\
\overline{A}B+|B|^2 & \rho+|B|^2  & A\overline{B}+2|B|^2 &  \cdots & A\overline{B}+2|B|^2 & 0\\
\overline{A}B+|B|^2 & \overline{A}B+2|B|^2 &  \rho+2|B|^2  &  \cdots & A\overline{B}+3|B|^2 & 0 \\
\vdots & \vdots & \vdots & \ddots &\vdots & \vdots\\
\overline{A}B+|B|^2 & \overline{A}B+2|B|^2 & \overline{A}B+3|B|^2 & \cdots &  \rho+n|B|^2 & t
\end{array} \right).
\]
Denote
\begin{align*}
    T_0&:=\rho-\overline{A}B-|B|^2=1+|A|^2 +A\overline{B},\\
    T_1&:=A\overline{B}-\rho=-\overline{T_0}-|B|^2.
\end{align*}
We perform the following elementary operations. For $j=0,\ldots,n-1$, substitute the row $R_j$ with $R_j-R_{j+1}.$ The first $n$ equations then become
\begin{align*}
   T_0c_k+T_1c_{k+1}-|B|^2\sum_{j=k+2}^n c_j=0, &\qquad k=0,\ldots,n-2\\
   T_0c_{n-1}+T_1 c_n=-t, &\qquad k=n-1,
\end{align*}
and the system is expressed via the matrix
\begin{equation}\label{E:matrix1}
\left(\begin{array}{ccccc|c}
T_0 & T_1 & -|B|^2 &  \cdots & -|B|^2 & 0\\
0 & T_0  & T_1 &  \cdots  & -|B|^2   & 0\\
0 & 0 &  T_0  &  \cdots & -|B|^2  & 0 \\
\vdots & \vdots & \vdots & \ddots &\vdots & \vdots\\
0 & 0 &  0  &  \cdots  & T_1 & -t \\
\overline{A}B+|B|^2 & \overline{A}B+2|B|^2 & \overline{A}B+3|B|^2 & \cdots &     \rho+n|B|^2 & t
\end{array} \right).
\end{equation}
Now, substituting once again $R_j$ with $R_j-R_{j+1}$ for $0\leq j\leq n-3,$ since $T_1+|B|^2=-\overline{T_0}$ we obtain the $n-2$ equations
\[
\left(\begin{array}{cccccccc|c}
T_0 & T_1-T_0 & \overline{T_0} &  0 &\cdots & 0 & 0 & 0& 0\\
0 & T_0  & T_1-T_0 &  \overline{T_0}   &\cdots  & 0 & 0 & 0& 0\\
0 & 0 &  T_0  &  T_1-T_0 & \cdots & 0 & 0  & 0& 0 \\
\vdots & \vdots & \vdots & \vdots & \ddots &\vdots  &\vdots & \vdots&\vdots\\
0 & 0 &  0  & 0 & \cdots  & \overline{T_0} & 0 & 0 & 0\\
0 & 0 &  0  & 0 & \cdots &T_1-T_0 & \overline{T_0} & 0& 0
\end{array} \right),
\]
that define the $3$-term recurrence relation in the statement of the theorem. \end{proof}

Even if the proof is complete, we want to further simplify the linear system~\eqref{E:matrix1}, for which we keep on using all the notation in the proof. The solution to this system can be determined in terms of the roots of the polynomial $Q(z)=\overline{T_0}z^2+(T_1-T_0)z+T_0.$  First, we update the last row $R_n$, replacing it with $R_n + \sum_{j=0}^{n-1} R_j$ and after that, we substitute $R_j$ by $R_j-R_{j+1},j=0,\ldots,n-2.$ Since $T_1+|B|^2=-\overline{T_0}$ we obtain the augmented matrix
\[
\left(\begin{array}{cccccccc|c}
T_0 & T_1-T_0 & \overline{T_0} & 0 &  \cdots & 0 & 0 & 0 & 0\\
0 & T_0  & T_1-T_0 & \overline{T_0} & \cdots & 0 & 0 & 0 & 0\\
0 & 0 &  T_0  & T_1-T_0 & \cdots & 0 & 0  & 0 & 0 \\
\vdots & \vdots & \vdots & \vdots &\ddots & \vdots & \vdots &\vdots & \vdots\\
0 & 0 & 0 & 0 &\cdots  & T_1-T_0 & \overline{T_0} &0 & 0\\
0 & 0 & 0 & 0 &\cdots & T_0 & T_1-T_0 &\overline{T_0} & t\\
0 & 0 &  0  & 0 & \cdots & 0  & T_0 & T_1 & -t \\ \rho & T_2 & T_2  & T_2 & \cdots & T_2  & T_2 & T_2 & 0
\end{array}\right),
\] with \begin{equation}\label{eqn5001}
T_2= \rho - \overline{T_0}= \overline{A}B+|B|^2. \end{equation} 

We can use the fact that $T_0 + (T_1-T_0) + \overline{T_0}= -|B|^2$ to design one more elementary operation that will allow us to get rid of most of the appearances of $T_2$: we replace $R_n$ with $R_n+\frac{T_2}{|B|^2} \sum_{j=0}^{n-1}R_j$. This gives us
\begin{equation}\label{eqn6001}
\left(\begin{array}{cccccc|c}
T_0 & T_1-T_0  & \overline{T_0} & 0 & \cdots & 0  & 0 \\
0 & T_0 & T_1-T_0  & \overline{T_0} & \cdots & 0  & 0 \\
\vdots & \ddots & \ddots &\ddots & \ddots & \vdots  &  \vdots \\
0 & \cdots   & T_0 & T_1-T_0  & \overline{T_0} & 0  & 0 \\
0 & \cdots & 0  & T_0 & T_1- T_0 & \overline{T_0}  & t \\
0 & \cdots & 0 & 0  & T_0 & T_1 & -t \\
T_3 & T_4 & 0 & \cdots   & 0 & 0 & 0
\end{array}\right),
\end{equation} where \begin{equation}\label{eqn5002}
T_3= \rho +  \frac{T_0T_2}{|B|^2}; \qquad T_4 = -\frac{\overline{T_0}T_2}{|B|^2}.
\end{equation}

As mentioned in Remark \ref{remark100}, we can divide the possibilities into three cases:

Case $(i)$: $T_0=0$ or, equivalently, $A\overline{B}=-(1+|A|^2)$. This case is the exact content of Theorem \ref{T:rationalAB}, and we know that $c_k=0$ for each $k\leq n-2$. Outside of case $(i)$, the polynomial $p_n$ is more complicated: if $c_0=c_1=0$, then by the recurrence relation we obtain that $c_k=0$ for every $k\leq n-1$, since $T_0\neq 0$. But then, $p_n$ would be a monomial, since only $c_n\neq 0$, and this is ruled out by Theorem \ref{T:monomials}.

Indeed, if $T_0\neq 0$, $Q$ is a polynomial of degree $2$ that does not vanish at $0$ (since $Q(0)=T_0\neq 0$). The exact solution to \eqref{eqn6001} depends on whether $Q$ has a double root or two simple roots.

Case $(ii)$: $Q$ has two simple non-zero solutions $\lambda_1\neq\lambda_2= \frac{T_0}{\overline{T_0}\lambda_1}.$ One can check that the solution of the recurrence relation in rows $R_0,\ldots, R_{n-3},$ of the system \eqref{eqn6001} is given by 
\[
c_k=\frac{c_1v_k-c_0\lambda_1\lambda_2v_{k-1}}{v_1},
\]
where $v_j=\lambda_2^j-\lambda_1^j$.
This can be used to translate the last 3 rows of \eqref{eqn6001} into the following augmented $3 \times 3$ system on $c_0$, $c_1$, and $c_n$:
\begin{equation}\label{eqn7001}
\left(\begin{array}{ccc|c}
\frac{\lambda_1 \lambda_2(v_{n-2}(T_0-T_1) - v_{n-3}T_0)}{v_1} & \frac{v_{n-2}T_0 + (T_1-T_0)v_{n-1}}{v_1} & \overline{T_0} & t \\
\frac{-T_0 \lambda_1 \lambda_2 v_{n-2}}{v_1} & \frac{T_0 v_{n-1}}{v_1} & T_1  & -t \\
T_3 &  T_4 & 0 & 0
\end{array}\right).
\end{equation}
The solution of this system for $t=1$ will be of the form $(u_0,u_1,u_n)$, and the homogeneity in $t$ implies that $c_n=tu_n$. Since $c_nt= t^2 u_n=1$, we can solve for $t$. Necessarily one of the two solutions will be positive and that is $t$ that leads to the full solution $(c_0, c_1, c_n)$ and thus to a fully closed formula for the polynomial $p_n$.

Case $(iii)$: $Q$ has a double root $\lambda\neq 0$. We can write\[
Q(z)=\overline{T_0}z^2+(T_1-T_0)z+T_0=\overline{T_0}(z-\lambda)^2.
\]
Using that $T_0\neq 0$, we obtain that $\lambda=\frac{T_0}{|T_0|}$. In particular,  $|\lambda|=1$. Moreover, $\lambda$ also has the explicit expressions
\begin{equation}\label{eqn6003}
\lambda=\frac{T_0-T_1}{2\overline{T_0}}=\frac{2T_0}{T_0-T_1}.
\end{equation}
The solution to the recurrence relation given by rows $R_0,\ldots,R_{n-3}$ is 
\[
c_k=\big((1-k)c_0+k\overline{\lambda}c_1\big)\lambda^k, \qquad k=0,\ldots,n-1.
\]
We can use \eqref{eqn6003} to see that $T_2 \neq 0$ (otherwise $T_0=1$ and then $|\lambda|>1$), and thus $T_4 \neq 0$. The equation from $R_{n}$ can thus be expressed as \[c_1=-\frac{T_3}{T_4}c_0\]
Evaluating $c_{n-2}$ and $c_{n-1}$ in terms of $c_0$ and $c_1$, we can translate the remaining 2 equations on \eqref{eqn6001} in terms of $c_0$, $c_n$ and $t$ only. Using \eqref{eqn6003}, row $R_{n-2}$ becomes 
\begin{equation}\label{eqn6006}((n-1) \lambda + n  \frac{T_3}{T_4}) \lambda^{n-1} c_0 + c_n = \frac{t\lambda}{|T_0|}. \end{equation}

The remaining equation $R_{n-1}$ is equivalent to \begin{equation}\label{eqn6007}((2-n)-\overline{\lambda}\frac{T_3}{T_4} (n-1)) \lambda^{n} c_0 + (\lambda-2) c_n = \frac{-t}{|T_0|}.\end{equation}
Solving the $2 \times 2$ linear system given by \eqref{eqn6006} and \eqref{eqn6007} will provide the solution in terms of $t$. If the system is solvable with $t=1$, the general solution is the result of multiplying that one by $t$, since the system is 1-homogeneous on $t$. The expression for $c_n$ will lead to two possible solutions for the equation $c_nt=1$, exactly one of which will give rise to a $c_n>0$, thus representing the complete solution.

\subsection{Higher order poles}\label{higher}
We wonder what happens when $\phi$ has a higher-order singularity at the boundary. Denote by $c$ the vector of coefficients $\{c_k\}_{k=0}^n$ of the polynomial $p_n$, so that $c_n>0$ and $\{p_n\}$ is an orthonormal basis of $H(b)$, with $\phi=b/a$ as before. Let us show with an example what the general principle will be for finite expressions of the form \begin{equation}\label{eqn80}
\phi(z) = r_0 + \frac{r_1}{1-z} + \frac{r_2}{(1-z)^2} = r_0 + \sum_{k=0}^\infty (r_1+r_2(k+1)) z^k.
\end{equation}
Applying Lemma \ref{L:formulainnerproduct}, we can obtain \[M_{j,k}:= \langle z^j, z^k\rangle_b\]
in terms of $r_0$, $r_1$ and $r_2$, for $j,k=0,\dots, n$. For simplicity, let us take $r_0, r_1$ and $r_2$ to be real. This is not an essential issue, but that way the Gram matrix $M=(M_{j,k})_{j,k}\in\mathbb{C}^{(n+1)\times(n+1)}$ becomes symmetric (otherwise, in what follows, think of $M^*$ instead of $M$). The equations $\left \langle p_n, z^k\right\rangle_b = 0$ (that hold for $k=0, \dots, n-1$) can be written as $Mc=t e_n$ where $t$ is a mute variable that can be chosen so that $c_nt=1$ and $e_n$ is the last element of the canonical basis in $\C^{n+1}$.
From now on, we denote by $I$ the identity matrix and by $B$ the backward shift matrix (a matrix with ones on the diagonal immediately above the main one, and zero everywhere else). 

The following proposition contains the most relevant structure on $M$ we were able to find: 
\begin{proposition}\label{propo100}
There exists an upper triangular pentadiagonal matrix $T \in \C^{(n+1) \times (n+1)}$ and a (rank 5) matrix $N$ that starts with $n-4$ null rows such that $(I-B)^4M=T+N$. \end{proposition}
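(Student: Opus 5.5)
The plan is to read $(I-B)^4M$ as a fourth-order finite difference of the Gram matrix in the row index. Since $(B M)_{j,k}=M_{j+1,k}$, we have, for every row $j\le n-4$,
\[
\big((I-B)^4M\big)_{j,k}=\sum_{i=0}^{4}(-1)^i\binom{4}{i}M_{j+i,k}.
\]
So for such rows the $j$-th row of $(I-B)^4M$ is the fourth forward difference $\Delta_j^4 M_{j,k}$ in the variable $j$, with $k$ fixed. For the last rows $j=n-3,\dots,n$ the sum is truncated at $i\le n-j$, and these rows carry no useful structure. I will put all rows $j\ge n-4$ into $N$, which forces $N$ to start with $n-4$ null rows and to have rank at most $5$. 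I will take $T$ to be the remaining rows $0\le j\le n-5$, and the whole task is to show these rows are supported on the diagonals $k=j,\dots,j+4$.

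The key step is to obtain explicit formulas for $M_{j,k}$ from Lemma~\ref{L:formulainnerproduct}. By \eqref{eqn80} the coefficients are $\phi_0=r_0+r_1+r_2$ and $\phi_m=r_1+r_2(m+1)$ for $m\ge1$. For $j\le k$ the lemma gives
\[
M_{j,k}=\delta_{j,k}+\phi_0\phi_{k-j}+\sum_{s=1}^{j}\big(r_1+r_2(s+1)\big)\big(r_1+r_2(k-j+s+1)\big).
\]
For fixed $k$ and $j<k$, the index $k-j\ge1$, so $\phi_{k-j}$ is affine in $j$. The sum is a sum over $s\le j$ of a quantity that is polynomial of degree $2$ jointly in $(s,j)$, hence a polynomial of degree at most $3$ in $j$. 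Thus on $\{0\le j\le k\}$ we have $M_{j,k}=P_k(j)+\delta_{j,k}$ with $\deg P_k\le 3$. The value at $j=k$ fits the same polynomial apart from the $\delta$, up to checking the single term $\phi_0\phi_0$ against $\phi_0\phi_{k-j}$ at $k-j=0$; I would absorb this discrepancy into a point correction at $j=k$.

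For $j>k$, symmetry (the $r_i$ are real) gives
\[
M_{j,k}=M_{k,j}=\sum_{s=0}^{k}\phi_s\phi_{j-k+s}.
\]
Here $j-k+s\ge1$ for all $s$, so every term is affine in $j$, and $M_{j,k}=L_k(j)$ with $L_k$ affine on $\{j>k\}$.

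With these formulas the support claim follows. If the window $\{j,\dots,j+4\}$ lies entirely in $\{j'<k\}$, that is $k\ge j+5$, then $\Delta^4_jP_k=0$ because $\deg P_k\le 3$. If the window lies entirely in $\{j'>k\}$, that is $k<j$, then $\Delta^4_jL_k=0$. Hence for $j\le n-5$ the entry is nonzero only when $j\le k\le j+4$. So rows $0,\dots,n-5$ form an upper triangular pentadiagonal band, which I take as $T$; I would extend $T$ by its band entries in the last rows if the convention requires a full square band matrix. The leftover rows define $N$.

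The main obstacle I expect is the bookkeeping of the boundary terms: the $\delta_{j,k}$, the special coefficient $\phi_0$ (which differs from the affine law $r_1+r_2(m+1)$ by $r_0$), and the switch from the cubic law to the affine law when $j$ crosses $k$. I will handle this by writing $M_{j,k}$ as a single cubic-plus-affine expression plus corrections supported on $j=k$. Each such correction contributes only within the same window $j\le k\le j+4$, so it cannot spoil the band structure. Verifying that the exact rank of $N$ is $5$, rather than merely at most $5$, would need an explicit look at those five rows; I would treat that as a secondary check.
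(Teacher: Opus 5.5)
Your proof is correct and follows essentially the same route as the paper. Both take iterated row differences of the Gram matrix obtained from Lemma~\ref{L:formulainnerproduct}: the paper computes the first three differences explicitly, while you note that, for a fixed column, the entry is affine in the row index below the diagonal and cubic above it, so the fourth difference vanishes outside the band. The rank question you left open closes at once for $n\ge 4$. Take $N$ to be the full rows $n-4,\dots,n$ of $(I-B)^4M$; these come from rows $n-4,\dots,n$ of the positive definite Gram matrix $M$ via a unipotent triangular (hence invertible) transformation, so $N$ has rank exactly $5$, a point the paper's proof does not address.
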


\begin{proof}
Denote $M_{j,k,0}:=M_{j,k}$ and $M_{j,k,d+1}=M_{j,k,d}-M_{j,k+1,d}$, which represents $d+1$ iterations of the elementary row operation on $M$ consisting of replacing row $R_k$ with itself minus row $R_{k+1}$. This is well defined as long as $k+d+1 \leq n$. 

Take, firstly, $j < k \leq n-2$. Then Lemma \ref{L:formulainnerproduct} yields 
\begin{align*}M_{j,k,1} &= M_{j,k}-M_{j,k+1} = \sum_{s=0}^j \phi_s(\phi_{k-j+s}-\phi_{k-j+s+1}) \\ &= \left(\sum_{s=0}^j \phi_s \right) r_2.\end{align*}
Since this does not depend on $k$, iterating one more time, we obtain
\[M_{j,k,2} = M_{j,k,1}-M_{j,k+1,1} = 0.\]
If, on the opposite extreme, $k+5 \leq j \leq n$, we obtain 
\begin{align*} M_{j,k,1} &= \sum_{s=0}^k \phi_s \phi_{j-k+s}- \sum_{s=0}^{k+1} \phi_s \phi_{j-k-1+s} \\ &= r_2 \left(\sum_{s=0}^k \phi_s \right)  - \phi_{k+1}\phi_j.  \end{align*}
We can continue iterating to obtain 
\begin{align*} M_{j,k,2} &= M_{j,k,1}-M_{j,k+1,1} =  -r_2 \phi_{k+1}  - \phi_{k+1}\phi_j + \phi_{k+2}\phi_j \\
&= r_2 (\phi_j-\phi_{k+1}) = r_2^2 (j-k-1) .  \end{align*}
The next iteration takes us to 
\[M_{j,k,3} = M_{j,k,2}-M_{j,k+1,2} = r_2^2 (j-k-1) - r_2^2 (j-k-2) =r_2^2. \]
This is a constant, and hence $M_{j,k,4}=0$.
At this point, we have performed the elementary operations corresponding to multiplying on the left by $(I-B)^4$ the matrix $M$, except for the values in the 5 diagonals where $j-k=0,\dots, 4$, that determine $T$. \end{proof}

\begin{remark} In the notation of Proposition \ref{propo100}, it can be shown that the entries $T_{k,k}$ and $T_{k+4,k}$ are independent of $k$ and $T_{k+1,k}$, $T_{k+2,k}$, $T_{k+3,k}$ are each determined by a polynomial of degree 2 on $k$. Assume that $k \leq n-5$, since the last 5 rows determine the matrix $N$. The general formula for $T_{j,k}$ is \[T_{j,k}= M_{j,k}-4M_{j,k+1}+6M_{j,k+2}-4 M_{j,k+3} + M_{j,k+4}.\] For instance, if $j =k$, we obtain 
\begin{align*}T_{k,k} &= \sum_{s=0}^k \phi_s (\phi_s-4\phi_{s+1}+6\phi_{s+2}- 4\phi_{s+3}+\phi_{s+4}) \\ &= \phi_0 r_0 = r_0(r_0+r_1+r_2),\end{align*} which is a constant independent of $k$.
However, if we take $j =k+1$, we obtain 
\begin{align*}T_{k+1,k} &= - \phi_{k+1}^2 + \sum_{s=0}^{k+1} \phi_s (\phi_s-4\phi_{s+1}+6\phi_{s+2}- 4\phi_{s+3}+\phi_{s+4}) \\ &= - \phi_{k+1}^2 + r_0(r_0+r_1+r_2),\end{align*} which is indeed a polynomial of degree 2 on $k$.
\end{remark}

\subsection{A potential description of the general phenomenon}\label{conclusion} It is to be expected that when $\phi$ is of the form $P/(1-z)^m$, with some polynomial $P$ of degree at most $m$, then the matrix $(I-B)^{2m}M$ will be expressed as the sum of two matrices $T$ and $N$. There, $T$ would be an upper triangular $(2m+1)-$diagonal matrix whose entries are determined, on each diagonal, by polynomials of degree $2(m-1)$. The perturbation matrix $N$ would contain non-null entries only on its last $2m+1$ rows. This structure should be exploited, in a general situation, to make an efficient algorithm to construct a closed formula for all orthonormal polynomials for a particular $\phi$ in finite time. The terms of the recurrence relations will, unfortunately, cease to be constant as soon as we leave the simplest cases but they will, nevertheless, provide a recursive definition for the coefficients $c_k$ of the polynomials $p_n$ for general $k$.

\bibliographystyle{plain}
\bibliography{mybibliography}

\end{document}